\documentclass{amsart}
\input cyracc.def

\usepackage{amssymb}
\usepackage{amsthm}

\theoremstyle{plain}
\newtheorem{thm}{Theorem}[section]
\newtheorem{lem}[thm]{Lemma}
\newtheorem{cor}[thm]{Corollary}
\newtheorem{prop}[thm]{Proposition}

\theoremstyle{definition}
\newtheorem{dfn}[thm]{Definition}

\def\dnfo{\;\raise.2em\hbox{$\mathrel|\kern-.9em\lower.4em\hbox
{$\smile$}$}}

\def\dnf#1{\lower.9em\hbox{$\buildrel\dnfo\over{ \scriptstyle  #1}$}}

\def\dfo{\;\raise.2em\hbox{$\mathrel|\kern-.9em\lower.4em\hbox{$\smile$}
\kern-.72em\lower.07em\hbox{\char'57}$}\;}
\def\df#1{\lower1em\hbox{$\buildrel\dfo\over{\scriptstyle #1}$}}

\newcommand{\dotminus}{\buildrel\textstyle.\over{\hbox{\vrule height2pt depth0pt width0pt}{\smash-}}}

\newcommand{\minusdot}{\dotminus}

\newcommand{\fA}{\mathfrak{A}}
\newcommand{\fB}{\mathfrak{B}}
\newcommand{\fC}{\mathfrak{C}}

\newcommand{\fL}{\mathfrak{L}}
\newcommand{\fM}{\mathfrak{M}}

\newcommand{\sA}{\mathcal{A}}
\newcommand{\sB}{\mathcal{B}}
\newcommand{\sC}{\mathcal{C}}
\newcommand{\sD}{\mathcal{D}}

\newcommand{\sF}{\mathcal{F}}
\newcommand{\sG}{\mathcal{G}}

\newcommand{\sI}{\mathcal{I}}
\newcommand{\sJ}{\mathcal{J}}

\newcommand{\sL}{\mathcal{L}}
\newcommand{\sM}{\mathcal{M}}

\newcommand{\sR}{\mathcal{R}}
\newcommand{\sS}{\mathcal{S}}

\newcommand{\sX}{\mathcal{X}}

\newcommand{\real}{\mathbb{R}}

\title{Metric Structures and Probabilistic Computation}

\author{Wesley Calvert}
\address{Department of Mathematics \& Statistics\\ Faculty Hall 6C\\
  Murray State University\\ Murray, Kentucky 42071}
\email{wesley.calvert@murraystate.edu}
\date{\today}

\keywords{Computable Model Theory, Probabilistic Computation,
  Randomized Computation, Continuous Logic, Metric Structures}

\begin{document}

\maketitle

\begin{abstract}
Continuous first-order logic is used to apply
  model-theoretic analysis to analytic structures (e.g.\ Hilbert
  spaces, Banach spaces, probability spaces, etc.).  Classical
  computable model theory is used to examine the algorithmic structure
  of mathematical objects that can be described in classical
  first-order logic.  The present paper shows that probabilistic
  computation (sometimes called randomized computation) can play an
  analogous role for structures described in continuous first-order
  logic.

The main result of this paper is an effective completeness theorem,
showing that every decidable continuous first-order theory has a
probabilistically decidable model.  Later sections give examples of
the application of this framework to various classes of structures,
and to some problems of computational complexity theory.
\end{abstract}

\section{Introduction}

Continuous first-order logic was introduced in \cite{cfo, mtmetric} as
a model-theoretic context sufficient to handle stability theory for
so-called ``metric structures.''  These are many-sorted structures in
which each sort is a complete metric space of finite diameter.  Key
examples include Hilbert spaces, Banach spaces,
probability spaces, and probability spaces with a distinguished automorphism.

For classical first-order model theory, there is a meaningful sense of
computation and effectiveness: In a group, for instance, we have a
reasonable algorithmic understanding of a group if the set of triples
constituting the Caley table (equivalently, the set of words equal to
the identity element) is decidable.  Of course, there are often still
many algorithmic unknowns in the group, such as the conjugacy problem
and the isomorphism problem \cite{dehn1910}.  The aim of the present paper is
to provide a similar framework for continuous first-order logic.

The framework suggested is probabilistic computation.  This model of
computation has seen wide use in complexity theory \cite{impwig,
  sipserbk}, and there is some room for hope that an understanding of
the relationship between continuous and classical first-order logic
might yield insights into the relationship between probabilistic and
deterministic computation.  Section \ref{timecomp} gives reasons for
such hope.

Not to get carried away in speculation, though, it is still cause for
contentment that a way can be found to meaningfully talk about
algorithmic information in the context of metric structures.  The
impossibility of finding an algorithm to solve arbitrary Diophantine
equations (see \cite{h10}), the relationship of isoperimetric functions to
the word problem \cite{isoper1, isoper2}, and much more depend on a notion of
computation adequate to the context of countable rings (in the case of
Diophantine equations) and groups (in the case of the word problem).
Some preliminary results on specific metric structures, given in
Section \ref{examples}, will suggest that there is ground for fruitful
research in the effective theory of these structures.

A key argument that probabilistic computation is the \emph{right}
algorithmic framework for this context is that it admits an effective
completeness theorem.  The classical theorem is this.

\begin{thm}[Effective Completeness Theorem]\label{classeffcomp} A
  (classically) decidable theory has a (classically) decidable
  model.\end{thm}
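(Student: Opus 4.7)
The plan is to effectivize the standard Henkin completeness proof. Let $T$ be a decidable theory in a computable language $L$. First I would expand $L$ to a language $L^{*} = L \cup \{c_0, c_1, \ldots\}$ by adjoining countably many fresh constant symbols to serve as Henkin witnesses, and fix an effective enumeration $\varphi_0, \varphi_1, \ldots$ of all $L^{*}$-sentences.

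Next I would recursively build an increasing chain of consistent theories $T = T_0 \subseteq T_1 \subseteq \cdots$ in $L^{*}$. At stage $s$, decide whether $T_s \cup \{\varphi_s\}$ is consistent; if so, set $T_{s+1} = T_s \cup \{\varphi_s\}$, and in the case $\varphi_s = \exists x\, \psi(x)$ also append $\psi(c_i)$ for the least constant $c_i$ not yet used. If $T_s \cup \{\varphi_s\}$ is inconsistent, put $T_{s+1} = T_s \cup \{\neg \varphi_s\}$. The union $T^{*} = \bigcup_s T_s$ is then a complete, consistent Henkin extension of $T$.

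Finally, I would form the canonical term model $\sM$ whose domain is the set of equivalence classes of Henkin constants under $c_i \sim c_j \iff (c_i = c_j) \in T^{*}$, and whose interpretation of each function symbol and relation symbol is read off from the atomic sentences in $T^{*}$. The usual Henkin lemma gives $\sM \models T^{*}$, hence $\sM \models T$; what remains is to verify that $\sM$ is classically decidable, which reduces to checking that $T^{*}$ is decidable and that the equivalence relation $\sim$ and atomic diagram can be computed uniformly from it.

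The main obstacle is establishing that each stage of the construction is genuinely effective. Since $T$ is decidable as a theory (not merely recursively enumerable), consistency of $T_s \cup \{\varphi_s\}$ reduces to deciding whether $T \vdash \neg(\chi_1 \wedge \cdots \wedge \chi_k \wedge \varphi_s)$, where $\chi_1, \ldots, \chi_k$ are the finitely many sentences added after $T_0$; this question is decidable by hypothesis. Decidability of $T^{*}$ then follows because any given $\varphi_s$ is resolved by stage $s$, so its status can be determined by running the construction long enough. With these effectiveness facts in hand, the algebraic verification that $\sM$ is a model carries over unchanged from the classical proof, and the parallel structure of this argument is what I would later aim to replicate, with suitable modifications, in the probabilistic setting of continuous logic.
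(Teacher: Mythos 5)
Your proposal is the standard effectivized Henkin construction and is essentially correct; the paper does not actually prove Theorem \ref{classeffcomp} itself (it defers to Harizanov and Millar), but your argument is precisely the classical template that the paper adapts in Section \ref{completeness} for the continuous/probabilistic case. The one detail worth making explicit is that when you test consistency of $T_s \cup \{\varphi_s\}$ by asking whether $T \vdash \neg(\chi_1 \wedge \cdots \wedge \chi_k \wedge \varphi_s)$, this conjunction mentions the fresh Henkin constants, so you must first replace them by new variables and universally quantify (the Lemma on Constants) to reduce to a question about $L$-sentences, which is what decidability of $T$ actually settles --- exactly the step the paper performs explicitly in its continuous analogue when it passes to $\left(\forall \bar{x}\, ((\psi \dotminus \varphi) \dotminus \theta)(\bar{x}/\bar{c})\right)^\circ_T$.
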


A full proof of this result may be found in \cite{harizanovhrm}, but
it was known much earlier, at least to Millar \cite{millarfound}.  The
main theoretical contribution of the present paper will be to
interpret the terms of this theorem in such a way as to apply to
a continuous first-order theory and probabilistic computation.  The
main result of the present paper is the proof in Section
\ref{completeness} of the theorem naturally corresponding to Theorem \ref{classeffcomp}.

Section \ref{seccfo} will describe the syntax and semantics for
continuous first-order logic.  The reader familiar with
\cite{cfocomplete} or \cite{mtmetric} will find nothing new in Section
\ref{seccfo}, except a choice of a finite set of logical connectives (no such choice
is yet canonical in continuous first-order logic).  Section
\ref{probcomp} will define probabilistic Turing machines and the
class of structures they compute.  Section \ref{completeness} will contain
the proof of the main result.  Section \ref{examples} will contain some
examples, exhibiting different aspects of the information which is
conveyed by the statement that a certain structure is
probabilistically computable, and in Section \ref{timecomp} we will
conclude with some remarks on time complexity of structures.

\section{Continuous First-Order Logic}\label{seccfo}

We will, in keeping with the existing literature on continuous
first-order logic, adopt the slightly unusual
convention of using $0$ as a numerical value for True (or acceptance)
and $1$ as a numerical value for False (or rejection).  The authors of
\cite{cfo} chose this convention to emphasize the metric nature of
their logic.

Continuous first-order logic is an extension of {\L}ukasiewicz
propositional logic.  The following definitions are from
\cite{cfocomplete}.

\subsection{Semantics}

\begin{dfn} A \emph{continuous signature} is an object of the form
  $\sL = (\sR, \sF, \sG, n)$ where
\begin{enumerate}
\item $\sR$ and $\sF$ are disjoint and $\sR$ is nonempty, and 
\item $n$ is a function associating to each member of $\sR \cup \sF$ its arity
\item $\mathcal{G}$ has the form $\{\delta_{s,i}
  : (0,1] \to (0,1] : s \in \mathcal{R} \cup \mathcal{F} \mbox{ and }
  i < n_s\}$
\end{enumerate}
\end{dfn}

Members of $\sR$ are called \emph{relation symbols}, and members of $\sF$
\emph{function symbols}.  We now define the class of structures.

\begin{dfn} Let $\sL = (\sR, \sF, \sG, n)$ be a continuous signature.
  A \emph{continuous $\sL$-pre-structure} is an ordered pair $\fM = (M,
  \rho)$, where $M$ is a non-empty set, and $\rho$ is a function on
  $\sR \cup \sF$ such that
\begin{enumerate}
\item To each function symbol $f$, the function $\rho$ assigns
  a mapping $f^\fM:M^{n(f)} \to M$
\item To each relation symbol $P $, the function $\rho$ assigns
  a mapping $f^\fM:M^{n(P)} \to [0,1]$.
\item The function $\rho$ assigns $d$ to a pseudo-metric $d^\fM : M
  \times M \to [0,1]$.
\item For each $f \in \mathcal{F}$ for each $i < n_f$, and for each
  $\epsilon \in (0,1]$, we have \[\forall \bar{a}, \bar{b}, c, e
  \left[ d^\fM(c,e) < \delta_{f,i} \Rightarrow d^\fM\left(f^\fM(\bar{a}, c,
  \bar{b}), f^\fM(\bar{a}, e, \bar{b})\right) \leq \epsilon\right]\]
  where $lh(\bar{a}) = i$ and $lh(\bar{a}) + lh(\bar{b}) = n_f-1$.
\item For each $P \in \mathcal{R}$ for each $i < n_P$, and for each
  $\epsilon \in (0,1]$, we have \[\forall \bar{a}, \bar{b}, c, e
  \left[ d^\fM(c,e) < \delta_{f,i} \Rightarrow |P^\fM(\bar{a}, c,
  \bar{b}) - P^\fM(\bar{a}, e, \bar{b})| \leq \epsilon\right]\]
  where $lh(\bar{a}) = i$ and $lh(\bar{a}) + lh(\bar{b}) = n_P-1$.
\end{enumerate}
\end{dfn}

\begin{dfn} A \emph{continuous weak $\sL$-structure} is a continuous
  $\sL$-pre-structure such that $\rho$ assigns to $d$ a
  metric.\end{dfn}

Since we are concerned here with countable structures (i.e.\ those
accessible to computation), we will not use the stronger notion of a
\emph{continuous $\sL$-structure} common in the literature, which
requires that $\rho$ be assigned to a \emph{complete} metric.
However, it is possible, given a continuous weak structure (even a
pre-structure), to pass to a completion \cite{cfocomplete}.

\begin{dfn} Let $V$ denote the set of variables, and let $\sigma: V
  \to M$.  Let $\varphi$ be a formula.
\begin{enumerate}
\item The \emph{interpretation under $\sigma$} of a term $t$ (written
  $t^{\fM,\sigma})$ is defined by replacing each variable $x$ in $t$
  by $\sigma(x)$.
\item Let $\varphi$ be a formula.  We then define the \emph{value of
  $\varphi$ in $\fM$ under $\sigma$} (written $\fM(\varphi, \sigma)$)
  as follows:
\begin{enumerate}
\item $\fM(P(\bar{t}), \sigma) := P^\fM(\overline{t^{\fM, \sigma}})$
\item $\fM(\alpha \dotminus \beta, \sigma) := \max{(\fM(\alpha,
  \sigma) - \fM(\beta, \sigma), 0)}$
\item $\fM(\lnot \alpha, \sigma) := 1- \fM(\alpha, \sigma)$
\item $\fM(\frac{1}{2} \alpha, \sigma) := \frac{1}{2} \fM(\alpha,
  \sigma)$
\item $\fM(\sup_x{\alpha}, \sigma) := \sup\limits_{a \in
  M}{\fM(\alpha, \sigma_x^a)}$, where $\sigma_x^a$ is equal to
  $\sigma$ except that $\sigma_x^a(x) = a$.
\end{enumerate}
\item We write $(\fM, \sigma) \models \varphi$ exactly when
  $\fM(\varphi, \sigma) = 0$.
\end{enumerate}
\end{dfn}
Of course, if $\varphi$ has no free variables, then the value of
$\fM(\varphi, \sigma)$ is independent of $\sigma$.

\subsection{Syntax}

\begin{dfn} Let $\sS_0$ be a set of distinct propositional symbols.
  Let $\sS$ be freely generated from $\sS_0$ by the formal binary
  operation $\dotminus$ and the unary operations $\lnot$ and
  $\frac{1}{2}$.  Then $\sS$ is said to be a \emph{continuous
  propositional logic}.\end{dfn}

We now define truth assignments for continuous propositional logic.

\begin{dfn} Let $\sS$ be a continuous propositional logic.
\begin{enumerate}
\item if $v_0: \sS_0 \to [0,1]$ is a mapping, we can extend $v_0$ to a
  unique mapping $v: \sS \to [0,1]$ by setting
\begin{enumerate}
\item $v(\varphi \dotminus \psi) := \max{(v(\varphi) - v(\psi),0)}$
\item $v(\lnot\varphi) := 1-v(\varphi)$
\item $v(\frac{1}{2} \varphi) = \frac{1}{2} v(\varphi)$
\end{enumerate}
We say that $v$ is the \emph{truth assignment} defined by $v_0$.
\item We write $v \models \Sigma$ for some $\Sigma \subseteq \sS$
  whenever $v(\varphi) = 0$ for all $\varphi \in \Sigma$.
\end{enumerate}
\end{dfn}

Roughly, $\varphi \dotminus \psi$ has the sense of $\psi \to
\varphi$.  We can also, of course, define $\varphi \wedge \psi$ as
$\varphi \dotminus (\varphi \dotminus \psi)$, and $\varphi \vee \psi$
via deMorgan's law.  We can also define something resembling
equivalence, $|\varphi - \psi| = (\varphi \dotminus \psi) \vee (\psi
\dotminus \varphi)$.  {\L}ukasiewicz propositional logic is the
fragment of this logic which does not involve $\frac{1}{2}$.

To make a first-order predicate variant of this logic, we use $\sup$
in the place of $\forall$ and $\inf$ in the place of $\exists$ (with
the obvious semantics, as will be described in what follows).  We
typically also include a binary function $d$, whose standard
interpretation is generally a metric.  Now we
give the syntactic axioms for continuous first-order logic:

\newcounter{axiom}
\begin{list}{(A\arabic{axiom})}{\usecounter{axiom}}
\item $(\varphi \dotminus \psi) \dotminus \varphi$
\item $((\chi \dotminus \varphi) \dotminus (\chi \dotminus \psi))
  \dotminus (\psi \dotminus \varphi)$
\item $(\varphi \dotminus (\varphi \dotminus \psi)) \dotminus (\psi
  \dotminus (\psi \dotminus \varphi))$
\item $(\varphi \dotminus \psi) \dotminus (\lnot \psi \dotminus \lnot
  \varphi)$
\item $\frac{1}{2} \varphi \dotminus (\varphi \dotminus \frac{1}{2}
  \varphi)$
\item $(\varphi \dotminus \frac{1}{2} \varphi) \dotminus \frac{1}{2}
  \varphi$.
\item $(\sup_x{\psi} \dotminus \sup_x{\varphi}) \dotminus \sup_x{(\psi
  \dotminus \varphi)}$
\item $\varphi[t/x] \dotminus \sup_x{\varphi}$ where no variable in
  $t$ is bound by a quantifier in $\varphi$.
\item $\sup_x{\varphi} \dotminus \varphi$, wherever $x$ is not free in
  $\varphi$.
\item $d(x,x)$
\item $d(x,y) \minusdot d(y,x)$
\item $\left(d(x,z) \minusdot d(x,y)\right) \minusdot d(y,z)$
\item For each $f \in \sF$, each $\epsilon \in (0,1]$, and each $r, q
  \in \sD$ with $r > \epsilon$ and $q < \delta_{f,i}(\epsilon)$, the
  axiom $\left(q \minusdot d(z,w)\right) \wedge \left(d\left(f(\bar{x}, z,
  \bar{y}), f(\bar{x}, w, \bar{y})\right) \minusdot r\right)$, where
  $lh(\bar{x}) + lh(\bar{y}) = n_f-1$.
\item For each $P \in \sR$, each $\epsilon \in (0,1]$, and each $r, q
  \in \sD$ with $r > \epsilon$ and $q < \delta_{P,i}(\epsilon)$, the
  axiom $\left(q \minusdot d(z,w)\right) \wedge \left(\left(P(\bar{x}, z,
  \bar{y}) \minusdot P(\bar{x}, w, \bar{y})\right) \minusdot r\right)$, where
  $lh(\bar{x}) + lh(\bar{y}) = n_P-1$.
\end{list}

Axioms A1--A4 are those of {\L}ukasiewicz propositional logic, and
axioms A5--A6 are those of continuous propositional logic.  Axioms
A7--A9 describe the role of the quantifiers.  Axioms A10--A12
guarantee that $d$ is a pseudometric, and axioms A13--A14 guarantee
uniform continuity of functions and relations.  We write $\Gamma
\vdash_Q \varphi$ whenever $\varphi$ is provable from $\Gamma$ in
continuous first-order logic.  Where no confusion is likely, we will
write $\Gamma \vdash \varphi$.

\section{Probabilisticly Computable Structures}\label{probcomp}

If $M$ is a Turing machine, we write $M^x(n)$ for the result of
applying $M$ to input $n$ with oracle $x$.  Excepting the polarity
change to match the conventions above, the following definition
is standard; it may be found, for instance, in \cite{sipserbk}.

\begin{dfn} Let $2^\omega$ be the set of infinite binary sequences,
  with the usual Lebesgue probability measure $\mu$.
\begin{enumerate}
\item A \emph{probabilistic Turing machine} is a Turing machine
  equipped with an oracle for an element of $2^\omega$, called the
  \emph{random bits}, with output in $\{0,1\}$.
\item We say that a probabilistic Turing machine $M$ \emph{accepts $n$
  with probability $p$} if and only if $\mu \{x \in 2^\omega :
  M^x(n)\downarrow = 0\} = p$.
\item We say that a probabilistic Turing machine $M$ \emph{rejects $n$
  with probability $p$} if and only if $\mu \{x \in 2^\omega :
  M^x(n)\downarrow = 1\} = p$.
\end{enumerate}
\end{dfn}

\begin{dfn} Let $\sL$ be a computable continuous signature.  Let $\fM$
  be a continuous $\sL$-structure.  Let $\sL(\fM)$ be the expansion of
  $\sL$ by a constant $c_m$ for each $m \in M$ (i.e.\ a unary
  predicate $c_m \in \sR$ where $c_m^\fM(x) := d(x,m)$).  Then the
  \emph{continuous atomic 
  diagram} of $\fM$, written $D(\fM)$ is the set of all pairs
$(\varphi, p)$, where $\varphi$ is a quantifier-free (i.e.\ $\sup$-
and $\inf$-free) sentence in $\sL(\fM)$ and $\fM(\varphi, \sigma) =
p$.  The continuous elementary diagram $D^*(\fM)$ is the same, except that
  $\varphi$ is not required to be quantifier-free.\end{dfn}

Note that the definition is independent of $\sigma$, since a
sentence has no free variables.

\begin{dfn} We say that a continuous pre-structure $\fM$ is
  \emph{probabilistically computable} (respectively,
  \emph{probabilistically decidable}) if and only if there is some
  probabilistic Turing machine $T$ such that, for every pair
  $(\varphi, p) \in D(\fM)$ (respectively, $D^*(\fM)$) the machine $T$ accepts $\varphi$ with
  probability $p$. \end{dfn}

Suppose $T$ is a deterministic machine (i.e.\ one that makes no use of
its random bits; a classical Turing machine) and $\fM$ a classical
first-order structure.  Then this definition corresponds exactly to
the classical definition of a computable structure.

We cannot do entirely without the probabilistic machines (that is, we
cannot thoroughly understand probabilistically computable structures
using only classical Turing machines), as the follwoing result shows.

\begin{lem}[No Derandomization Lemma] There is a probabilistically
  computable weak structure $\fM$ such that the set $\{(\varphi, p)
  \in D(\fM) : p \in \sD \}$ is not classically computable.\end{lem}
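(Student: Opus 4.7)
The plan is to build a weak structure whose atomic values form a uniformly computable real sequence, but for which exact equality of those values to specific dyadic rationals encodes the halting problem. Probabilistic computability will follow from a standard ``sample-and-compare'' argument applied to the uniformly computable atomic values, whereas recognizing dyadicness of such values is as hard as the complement of the halting problem.

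I would first fix a signature $\sL$ consisting of one unary predicate $P$, countably many constants $c_n$ for $n \in \omega$, and the trivial uniform-continuity modulus $\delta_{P,0}(\epsilon) := 1/2$. Let $\fM$ have universe $\omega$ equipped with the discrete metric $d(n,m) = 1$ for $n \neq m$, so the uniform-continuity axioms for $P$ and for $d$ hold vacuously. Set $P^\fM(n) := 1/2 + 2^{-t_n - 9}$ if the $n$-th Turing machine $M_n$ halts on input $n$ at step $t_n$, and $P^\fM(n) := 1/2$ if $M_n(n)$ diverges. The crux of the construction is the bump $2^{-t_n - 9}$: after simulating $M_n(n)$ for $s$ steps the value $P^\fM(n)$ is pinned down to within $2^{-s-9}$ whether or not the simulation has halted, so $n \mapsto P^\fM(n)$ is a \emph{uniformly} computable real function. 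This is precisely the feature that fails for coarser codings like ``$1/2$ or $1/3$,'' and it is what allows $\fM$ to be probabilistically computable at all.

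Uniform computability of $\fM(\varphi)$ for arbitrary quantifier-free $\varphi \in \sL(\fM)$ then follows by structural induction, since the connectives $\dotminus$, $\lnot$, and $\frac{1}{2}$ are all $1$-Lipschitz. Given this, the PTM $T$ on input $\varphi$ uses random bits to sample $u \in [0,1]$ uniformly, computes dyadic approximations $\hat v_s$ with $|\hat v_s - \fM(\varphi)| < 2^{-s}$, and halts with acceptance as soon as $u < \hat v_s - 2^{-s}$ and with rejection as soon as $u > \hat v_s + 2^{-s}$. Non-halting occurs only on $\{u = \fM(\varphi)\}$, a measure-zero event, and the acceptance event is $\{u < \fM(\varphi)\}$, giving acceptance probability exactly $\fM(\varphi)$ as required.

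For the non-computability conclusion, observe that for each $n$ the pair $(P(c_n), 1/2)$ belongs to $D(\fM)$ iff $P^\fM(n) = 1/2$, which by construction holds iff $M_n(n)$ diverges; and since $1/2 \in \sD$ automatically, this pair lies in $\{(\varphi,p) \in D(\fM) : p \in \sD\}$ iff $M_n(n)$ diverges. Classical computability of the dyadic slice would therefore decide the complement of the halting problem, which is impossible. The main pitfall I anticipate is not in any single step but in the temptation of the coarser encodings just mentioned, which defeat uniform computability and hence probabilistic computability outright; the bump encoding is what simultaneously preserves uniform computability of values and makes the equality tests $\Pi^0_1$-hard in the right way.
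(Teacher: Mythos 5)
Your proof is correct and follows essentially the same strategy as the paper's: both encode a c.e.-complete set into whether an atomic value equals $\frac{1}{2}$ exactly or exceeds it by a small bump that shrinks with the stage at which halting is observed, which keeps the atomic values uniformly computable (hence the structure probabilistically computable) while making the exact-equality test decide the complement of a c.e.\ set. The only cosmetic difference is that the paper constructs the probabilistic machine directly by allocating measure to oracle strings stage by stage, whereas you first define the computable real values and then realize them by uniform sampling of $u\in[0,1]$; both yield the required acceptance probabilities.
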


\begin{proof} Let $U$ be a computably enumerable set, and let $S$ be
  the complement of $U$.  We first construct a probabilistically
  computable function $f$ such that \[P(f^\sigma(x) = 0) = \frac{1}{2}\]
  if and only if $x \in S$.  At stage $t$, if $x \in U_t$, pick two
  strings $\sigma_t, \tau_t$ of length $t+2$ such that $f_t(x)$ does
  not halt with random bits $\sigma_t$ or $\tau_t$.  We define the
  function $f_{t+1} := f_t \cup \{f^{\sigma_t}(x) = 0, f^{\tau_t}(x) =
  1\}$.  On the other hand, if $x \in U_t$, then we arrange that
  $f_{t+1}^\sigma(x) = 0$ for all $\sigma$ of length at most $t+2$
  where $f_t^\sigma(x)$ does not halt.  Let $f = \bigcup\limits_{t \in
  \omega} f_t$.  Now if $x \in S$, we never see
  $x \in U_t$, so $f(x) = 0$ with probability $\frac{1}{2}$.
  Otherwise, there is some $t$ such that $x \in U_t - U_{t-1}$, and
  then $f(x) = 0$ with probability
  $1 - \sum\limits_{i = 2}^t 2^{-i} > \frac{1}{2}$.

Now we let $\fM$ be the structure $(\omega, f)$, where $f$ is
interpreted as a unary predicate in the obvious way, and $d$ is the
discrete metric.  If we could
decide membership in $\{(\varphi, p) \in D(\fM) : p \in \sD \}$ with a
classical Turing machine, then we could also decide membership in
$U$.\end{proof}

Of course, this same argument could work for any other uniformly
computable set of reals in place of $\sD$.  To some extent, though, we
could do without the probabilistic machines.  The following results
show that the statement of the No Derandomization Lemma is the
strongest possible.

\begin{prop}\label{sfunction} For any probabilistically computable pre-structure $\fM$,
  there is some (classically) computable function $f$, monotonically
  increasing in the second variable, and some (classically) computable
  function $g$, monotonically decreasing in the second variable, such
  that for any pair $(\varphi, p) \in D(\fM)$, we have $\lim\limits_{s
  \to \infty} f(\varphi, s) = p$ and $\lim\limits_{s \to \infty}
  g(\varphi, s) = p$.\end{prop}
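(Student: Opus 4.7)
The plan is to use two computable simulations of the probabilistic machine $T$ witnessing probabilistic computability of $\fM$: one to squeeze $p$ from below (giving $f$), and a second, applied to the negated formula, to squeeze $p$ from above (giving $g$). The key observation that lets us get an upper bound is that the negation connective flips truth values by $v \mapsto 1-v$, so the value of $\lnot\varphi$ in $\fM$ is exactly $1-p$, and therefore $(\lnot\varphi,1-p)\in D(\fM)$ as well. Since $\lnot\varphi$ is still quantifier-free (and syntactically computable from $\varphi$), we can feed it to $T$ and use a lower approximation to $1-p$ as an upper approximation to $p$.

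For the lower approximation, I would set
\[ f(\varphi,s) \;:=\; \frac{\bigl|\{\sigma\in 2^s : T^\sigma(\varphi)\text{ halts with output }0\text{ in }\le s\text{ steps}\}\bigr|}{2^s}. \]
This is (classically) computable by brute force: simulate $T$ on $\varphi$ with each of the $2^s$ length-$s$ random strings for $s$ steps. Since any computation of length at most $s$ reads at most $s$ random bits, $f(\varphi,s)$ equals the measure of the clopen set $A_s\subseteq 2^\omega$ of random sequences causing $T$ to accept $\varphi$ within $s$ steps. Clearly $A_s\subseteq A_{s+1}$, so $f$ is monotonically increasing in $s$; and $\bigcup_s A_s = \{x : T^x(\varphi)\downarrow = 0\}$, whose measure is $p$ by hypothesis, so $f(\varphi,s)\nearrow p$.

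For the upper approximation, I set $g(\varphi,s) := 1 - f(\lnot\varphi,s)$. Because $(\lnot\varphi,1-p)\in D(\fM)$, the previous paragraph applied to $\lnot\varphi$ gives $f(\lnot\varphi,s)\nearrow 1-p$; hence $g(\varphi,s)\searrow p$, and $g$ is monotonically decreasing in $s$, with $g(\varphi,s)\ge p$ throughout since $f(\lnot\varphi,s)\le 1-p$. Computability of $g$ is immediate from computability of $f$ together with the computability of the map $\varphi\mapsto\lnot\varphi$ on syntactic formulas.

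The only conceptual obstacle is the familiar one that a lone probabilistic machine gives one-sided information: $\mu(A_s)$ converges up to $p$, but naively trying to upper-bound $p$ by $1-\mu(\text{reject in time }s)$ fails unless $T$ halts almost surely on $\varphi$, which is not assumed. The trick that resolves this is simply to exploit that the continuous signature is closed under $\lnot$ and that $D(\fM)$ is closed under the corresponding value transformation, so the same machine $T$ already encodes approximations from both sides, one for each formula and one for its negation.
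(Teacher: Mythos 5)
Your proof is correct. For the increasing approximation $f$ you do essentially what the paper does --- simulate $T$ on finite oracle strings and accumulate dyadic measure --- though more carefully: by bounding both the string length and the running time by $s$, your $f$ is manifestly total and computable, whereas the paper's formula $\sum_{i\le s} T_\fM^{\sigma_i}(\varphi)\cdot 2^{-lh(\sigma_i)}$ involves values $T^{\sigma_i}(\varphi)$ that may diverge and sums over non-minimal strings, issues it leaves implicit. Where you genuinely depart from the paper is in the decreasing approximation: the paper simply declares ``the definition of $g$ is symmetric,'' i.e., accumulate the measure of \emph{rejecting} strings and subtract from $1$. That converges to $1-\mu\{x: T^x(\varphi)\downarrow=1\}$, which equals $p$ only if $T$ halts almost surely on $\varphi$ --- a property the definition of probabilistic computability does not guarantee, since it only pins down the acceptance probability. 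Your device of running $T$ on $\lnot\varphi$ and using $(\lnot\varphi,1-p)\in D(\fM)$ (valid because $\lnot$ preserves quantifier-freeness and $\fM(\lnot\varphi)=1-\fM(\varphi)$) uses only the stated hypothesis and so closes this gap; the last paragraph of your write-up correctly identifies exactly the point at which the naive symmetric definition would fail. The one thing the paper's version buys in exchange is that it queries $T$ only on $\varphi$ itself, but since the hypothesis applies to every quantifier-free sentence, nothing is lost by your substitution.
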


\begin{proof} Let $\fM$ be computed by the probabilistic Turing
  machine $T_\fM$.  Let $(\sigma_s)_{s \in \omega}$ be an effective
  list of all strings in $2^{< \omega}$.  Now we define \[f(\varphi,
  s) := \sum\limits_{i \leq s} \left(T_\fM^{\sigma_i}(\varphi) \cdot
  2^{-lh(\sigma_i)}\right).\]  The definition of $g$ is symmetric.
  These clearly have the correct properties.\end{proof}

Functions of the same form as $f$ and $g$ are often seen in classical
computable model theory \cite{khisamievhrm, ordercomp, cchmeq,
  cchmapg}.

\begin{cor} For any probabilistically computable pre-structure $\fM$,
  the set of pairs, $(\varphi, p) \in D(\fM)$ is the complement of a
  (classically) computably enumerable set.\end{cor}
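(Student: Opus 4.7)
The plan is to leverage Proposition \ref{sfunction} to recognize effectively when a proposed pair $(\varphi, p)$ fails to lie in $D(\fM)$. Fix computable approximations $f(\varphi, s)$ and $g(\varphi, s)$ as in that proposition. By the monotonicity and convergence stated there, if $p^*$ denotes the true value $\fM(\varphi)$, then $f(\varphi, s) \leq p^* \leq g(\varphi, s)$ for every stage $s$, with both sides converging to $p^*$.

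Given a candidate pair $(\varphi, p)$ with $p$ drawn from whatever computable set of reals the complement is being taken within (for example $\sD$, matching the setting of the No Derandomization Lemma), I would have a classical Turing machine search for a stage $s$ such that either $f(\varphi, s) > p$ or $g(\varphi, s) < p$, and enumerate $(\varphi, p)$ into the complement of $D(\fM)$ the moment such an $s$ appears. This search is a uniformly effective classical computation in $\varphi$, $p$, and $s$, with no use of any random oracle.

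Correctness follows immediately from the monotone sandwiching. If $(\varphi, p) \in D(\fM)$ then $p = p^*$, so both inequalities fail at every stage and nothing is enumerated. If instead $p \neq p^*$, say $p < p^*$, convergence of $f(\varphi, \cdot)$ upward to $p^*$ produces some $s$ with $f(\varphi, s) > p$; symmetrically if $p > p^*$, monotone convergence of $g(\varphi, \cdot)$ downward eventually witnesses $g(\varphi, s) < p$. I do not anticipate a substantive obstacle: the argument is the standard monotone-convergence enumeration. The only fussy point is that the comparisons $f(\varphi, s) > p$ and $g(\varphi, s) < p$ be algorithmically decidable, which is fine since $f(\varphi, s)$ and $g(\varphi, s)$ are finite sums of dyadic rationals and hence any effectively presented $p$ admitting comparison with computable rationals will do.
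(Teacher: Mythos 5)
Your argument is correct and is precisely the ``immediate'' deduction the paper intends: the paper's proof of this corollary is just the one-line assertion that it follows from Proposition \ref{sfunction}, and your monotone-sandwiching enumeration of the complement is the standard way to cash that out. The only minor caution is that the comparisons $f(\varphi,s)>p$ and $g(\varphi,s)<p$ need only be semi-decidable (which they are for any effectively presented real $p$), not fully decidable as you suggest; this does not affect the conclusion.
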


\begin{proof} Follows immediately from Proposition
  \ref{sfunction}. \end{proof}

These limitations notwithstanding, the definition via probabilistic
machines gives a more natural continuity with the established
literature on continuous first-order model theory \cite{mtmetric}.  In
addition, this definition is in any case not dispensable when, for
instance, time complexity of computation is at issue (see Section
\ref{timecomp}).

\section{Effective Completeness}\label{completeness}

Theorem \ref{classeffcomp} is an important piece of evidence that
classical Turing computation (or any of the many equivalent concepts)
is properly synchronized with classical first-order logic.  In
particular, it asserts that under the minimal, obviously necessary
hypotheses, a classical first-order theory has a model which can be
represented by a classical computation.  The aim of the present
section is to prove a similar result for continuous first-order logic
and probabilistic computation.  The following analogue to the
classical concept of the decidability of a theory was proposed in
\cite{cfocomplete}.

\begin{dfn} Let $\sL$ be a continuous signature and $\Gamma$ a set of
  formulas of $\sL$.  
\begin{enumerate}
\item We define \[\varphi_\Gamma^\circ :=
  \sup{\{\fM(\varphi, \sigma) : (\fM, \sigma) \models \Gamma\}}.\]
\item If $T$ is a complete continuous first-order theory, we say
  that $T$ is \emph{decidable} if and only if there is a (classically)
  computable function $f$ such that $f(\varphi)$ is an index for a
  computable real number equal to $\varphi_T^\circ$.
\end{enumerate}
\end{dfn}

\begin{thm}\label{effcomplwk} Let $T$ be a decidable continuous
  first-order theory.  Then there is a probabilistically decidable
  continuous pre-structure $\fM$ such that $\fM \models T$.\end{thm}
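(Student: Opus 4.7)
The plan is a Henkin-style construction carried out effectively in the continuous setting. I would expand $\sL$ to $\sL^+ = \sL \cup \{c_i : i \in \omega\}$ by countably many fresh constants, and build in stages a complete extension $T^+ \supseteq T$ in $\sL^+$ with two properties: first, the Henkin witness property, that for every $\sL^+$-formula $\inf_x \varphi(x)$ and every rational $\epsilon > 0$ there is a constant $c$ with $\varphi[c/x]_{T^+}^\circ \leq (\inf_x \varphi(x))_{T^+}^\circ + \epsilon$; and second, uniform computability, meaning there is a single computable function producing rational approximations to $\varphi_{T^+}^\circ$ for every closed $\sL^+$-formula. The desired pre-structure $\fM$ is then read off from the Henkin constants.

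To construct $T^+$, enumerate pairs $(\inf_x \varphi(x), k)$ with $k \in \omega$. At stage $s$, given a finitely generated extension $T_s \supseteq T$, pick the next unhandled pair, choose a fresh constant $c$, and adjoin the axiom $\varphi[c/x] \minusdot r$ for some rational $r$ with $(\inf_x \varphi(x))_{T_s}^\circ < r < (\inf_x \varphi(x))_{T_s}^\circ + 2^{-k}$. This is consistent: any model of $T_s$ contains an element realizing $\varphi$ to within $2^{-k}$ of its infimum, and naming such an element by $c$ produces a model of $T_s$ together with the new axiom. Simultaneously, refine rational approximations to $\psi_{T_s}^\circ$ for every closed formula $\psi$ of syntactic length at most $s$, using the decidability of $T$ and the Henkin content added so far. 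The union $T^+ = \bigcup_s T_s$ is then complete, fully Henkin-witnessed, and admits a uniformly computable value function.

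Next, define $\fM$ by setting $M := \{c_i : i \in \omega\}$, $d^\fM(c_i,c_j) := d(c_i,c_j)_{T^+}^\circ$ (a pseudometric by A10--A12), $P^\fM(\bar c) := P(\bar c)_{T^+}^\circ$ for each $P \in \sR$, and $f^\fM(\bar c) := c$ for a designated Henkin witness $c$ satisfying $d(f(\bar c),c)_{T^+}^\circ = 0$; the latter is well-defined up to the pseudometric, and A13--A14 give the required uniform continuity. A straightforward induction on formula complexity, using the Henkin property at each $\sup$ and $\inf$ together with the definitions at the atomic level, shows $\fM(\varphi) = \varphi_{T^+}^\circ$ for every closed $\sL^+$-sentence $\varphi$, so $\fM \models T$ and the continuous elementary diagram $D^*(\fM)$ coincides with the uniformly computable value family produced above. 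A probabilistic machine then witnesses probabilistic decidability: on input $\varphi$, it reads the random-bit oracle as the binary expansion of a uniformly distributed $r \in [0,1]$ while computing better and better rational enclosures of $p := \fM(\varphi)$; with probability one the intervals enclosing $r$ and $p$ eventually separate, and the machine halts with its output chosen from the comparison so that the acceptance probability equals $p$ in the sense of the paper's definition.

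The hard step is the effective scheduling in the stagewise construction. Classical Henkin arguments add a single atomic fact at each step with consistency checked by proof search; here each step pins a real-valued quantity to a rational window, and one must interleave the introduction of Henkin witnesses, the preservation of consistency, and the incremental refinement of value approximations so that the limit theory $T^+$ has a genuinely uniformly computable value function, and not merely one that exists abstractly. The continuous completeness machinery of \cite{cfocomplete} supplies the satisfiability needed at each individual stage, but the effective bookkeeping across stages is the genuinely new content to be checked.
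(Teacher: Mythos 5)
Your proposal follows essentially the same route as the paper: an effective Henkin construction (fresh witness constants together with stagewise axioms pinning each sentence's value into shrinking rational windows, checked for consistency via the decidability of $T$ after universally quantifying out the new constants) yielding a term model with a uniformly computable value function, followed by a probabilistic machine that converts those rational approximations into exact acceptance probabilities. Your random-real comparison machine is a cleaner packaging of the paper's explicit reservation of dyadic cylinders $K^A_s$ and $K^R_s$ for acceptance and rejection, but the underlying content is the same.
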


\begin{proof}  The construction of a model $\fM$ is given in
  \cite{cfocomplete}, by an analogue of Henkin's method.  Our model
  will be essentially the same, except that some care must be taken
  with effectiveness.  The principal content of the theorem consists
  in showing that this structure is probabilistically decidable.  We
  will define a probabilistic Turing machine which, for any formula
  $\varphi$, accepts $\varphi$ with probability $\fM(\varphi)$.

We begin by adding Henkin witnesses.  Let $\sD$ denote the dyadic
numbers in the interval (i.e.\ those of the form $\frac{k}{2^n}$ for
$k, n \in \mathbb{N}$).

\begin{dfn} Let $\Gamma$ be a set of formulae.  Then
  $\Gamma$ is said to be \emph{Henkin complete} if for every formula
  $\varphi$, every variable $x$, and every $p < q \in \sD$, there is a
  constant $c$ such that \[(\sup_x \varphi \dotminus q) \wedge (p
  \dotminus \varphi [c/x]) \in \Gamma.\]\end{dfn}

\begin{lem} We can effectively extend $T$ to a consistent set $\Gamma$
  of formulae which is Henkin complete. \end{lem}

\begin{proof} Let $\sL_0 = \sL$.  For each $n$, let $\sL_{n+1}$ be
the result of adding, for each formula $\varphi$ in $\sL_n$, and for each $x,
p, q$ as in the previous definition, a new constant $c_{(\varphi, x, p,
  q)}$.  We can also extend the theory $T$, beginning with $\Gamma_0 = T$.
For each $n$, the set $\Gamma_{n+1}$ is produced by adding to $T$, for
each formula $\varphi$ in $\sL_n$ and each $x, p, q$ as in the
previous definition, the formula $(\sup_x \varphi \dotminus q) \wedge
(p \dotminus \varphi[c_{(\varphi, x, p, q)}/x])$.  Let $\Gamma =
\bigcup\limits_n \Gamma_n$.  The consistency of $\Gamma$ is
demonstrated in \cite{cfocomplete}.

Note that this construction is in every way effective.  In particular,
there is a (classically) computable function which will, given $p, q
\in \sD$ and G\"odel numbers for $\varphi$ and $x$, give us a G\"odel
number for $c_{(\varphi, x, p, q)}$.  Moreover, the set $\Gamma$ is
(classically) computable. \end{proof}

We write $\sL^* = \bigcup\limits_n \sL^n$, and $C = \{c_{(\varphi, x,
  p, q)}\}$.

\begin{lem} We can effectively extend $\Gamma$ to a consistent set
  $\Delta^0$ such that for all formulae $\varphi, \psi$ in $\sL^*$ we
  have $\varphi \dotminus \psi \in \Delta$ or $\psi \dotminus \varphi
  \in \Delta^0$.\end{lem}

\begin{proof} We set $\Delta_0 = \Gamma$.  At stage $s+1$, for each pair $\psi, \varphi$ of
  sentences from $\sL^*$ such that neither $\psi \dotminus \varphi$
  nor $\varphi \dotminus \psi$ is in $\Delta_{s}$, we proceed as follows.  Let $\theta$ be
  the conjunction of all elements of $\Delta_{s}$, and let
  $\bar{c}$ be the constants from $C$ which occur in $(\psi \dotminus
  \varphi) \dotminus \theta$.  We then check (effectively, since the
  theory is decidable), whether $\left(\forall \bar{x} \left((\psi
  \dotminus \varphi) \dotminus \theta\right) (\bar{x}/\bar{c})
  \right)^\circ_T = 0$.  If so, then we add $\psi \dotminus \varphi$
  to form $\Delta_{s+1}$.  Otherwise, we do so with $\varphi
  \dotminus \psi$.  Now $\Delta^0 = \bigcup\limits_s \Delta_s$ is as
  required.  That this extension is consistent is established in
  \cite{cfocomplete}.\end{proof}

\begin{dfn} Let $\Delta$ be a set of formulas.  We say that $\Delta$
  is \emph{maximal consistent} if $\Delta$ is consistent and for all
  formulae $\varphi, \psi$ we have 
\begin{enumerate}
\item If $\Delta \vdash \varphi \dotminus 2^{-n}$ for all $n$, then
  $\varphi \in \Delta$, and
\item $\varphi \dotminus \psi \in \Delta$ or $\psi \dotminus \varphi
  \in \Delta$.
\end{enumerate}
\end{dfn}

Now let $\Delta^0 = \bigcup\limits_s \Delta_s$, and \[\Lambda =
\{\varphi : \forall n [ \delta^0 \vdash \varphi \dotminus 2^{-n}]\}.\]
Now $\Delta = \Delta^0 \cup \Lambda$ is maximal consistent, by
construction of $\Delta^0$.  Let $\fM$ be the model of $T$ whose
universe is the set of closed terms in $C$, as in \cite{cfocomplete}.

We now define the probabilistic Turing machine $G$ which will witness
that $\fM$ is probabilistically computable.  We set $K^A_{0} =
K^R_{0} = A_0 = R_0 = \emptyset$.  We define the functions $E(S) =
\{\sigma \supseteq \tau : \tau \in S\}$ and  \[P(S) =
\sum\limits_{\sigma \in S} \frac{1}{2^{lh(a)}}.\]

At stage $s$, if $\Delta_s \vdash \varphi \dotminus \frac{k}{2^n}$,
then we will arrange that $G$ accepts $\varphi$ with probability at least
$1-\frac{k}{2^n}$.  If $K^A_{s} = \emptyset$, then we find $2^n - k$
nodes $\sigma_1, \dots, \sigma_{2^n - k}$ of length $n$ in
$2^{<\omega} - E(K^R_s)$, and let $K^A_{s+1} = \{\sigma_1, \dots,
\sigma_{2^n - k}\}$.  If $K^A_s$ is nonempty and $P(K^A_s) \geq
1-\frac{k}{2^n}$, then we do nothing with $K^A$.  If $K^A_s$ is
nonempty and $P(K^A_s) < 1-\frac{k}{2^n}$, then we find some set
$\Sigma$ of elements of $2^{<\omega} - E(K^R_s)$ with length $n$ so that $P(K^A_s
\cup \Sigma) = 1 - \frac{k}{2^n}$, and let $K^A_{s+1} = K^A_s \cup \Sigma$.

If $\Delta_s \vdash \frac{k}{2^n} \dotminus \varphi$ then we will
arrange that $G$ rejects $\varphi$ with probability at least
$\frac{k}{2^n}$.  If $K^R_{s} = \emptyset$, then we find $k$ nodes
$\sigma_1, \dots, \sigma_{k}$ of length $n$ in $2^{<\omega} -
E(K^A_s)$, and let $K^R_{s+1} = \{\sigma_1, \dots, \sigma_{k}\}$.  If
$K^R_s$ is nonempty and $P(K^R_s) \geq \frac{k}{2^n}$, then we do
nothing with $K^R$.  If $K^R_s$ is nonempty and $P(K^R_s) <
\frac{k}{2^n}$, then we find some set $\Sigma$ of elements of
$2^{<\omega} - E(K^A_s)$ with length $n$ so that $P(K^R_s \cup \Sigma) =
\frac{k}{2^n}$, and let $K^R_{s+1} = K^R_s \cup \Sigma$.

At this point, it is necessary to verify that certain aspects of the
construction described so far are actually possible.  In particular,
we need to show that when we search for elements of $2^{<\omega} -
E(K^A_s)$, for instance, there will be some.  Now if $E(K^A_s)$
contains more than $2^n - k_1$ elements, we must have $P(K^A_s) < 1 -
\frac{k_1}{2^n}$, so that we must have had $\Delta_s \vdash \varphi
\dotminus \frac{k_1}{2^n}$ (Note that if $\Delta_s \vdash \varphi
\dotminus p$ and $q > p$, then also $\Delta_s \vdash \varphi \dotminus
q$).

\begin{lem} If there is some $s$ such that $\Delta_s \vdash \varphi
  \dotminus \frac{k_1}{2^n}$ and $\Delta_s \vdash \frac{k}{2^n}
  \dotminus \varphi$, then $(1 - \frac{k_1}{2^n}) + \frac{k}{2^n} \leq
  1$.
\end{lem}

\begin{proof} Suppose not.  Then $2^n - k_1 + k > 1$, so that $k - k_1
  > 0$ and $k > k_1$.  However, we also have $\frac{k}{2^n} \dotminus
    \frac{k_1}{2^n} = 0$, so that $k_1 \geq k$, a contradiction.
\end{proof}

The situation for finding elements of $2^{<\omega} - E(K^R_s)$ is
symmetric.

Returning to the construction, at stage $s$, we will add more
instructions.  We will guarantee that for any $\sigma \in E(K^A_s)$,
we will have $G^\sigma(\varphi) \downarrow = 0$, and for any $\sigma
\in E(K^R_s)$ we will have $G^\sigma(\varphi) \downarrow = 1$.

Let $\varphi$ be a sentence in $\sL^*$, and suppose $\fM(\varphi) =
p$.  We need to show that $G$ accepts $\varphi$ with probability $p$.
Since $\Delta$ is maximal consistent, for each $q_0, q_1 \in \sD$ with
$q_0 \leq \fM(\varphi) \leq q_1$, there was some $s$ for which
$\Delta_s \vdash \varphi \dotminus q_1$ and for which $\Delta_s \vdash
q_0 \dotminus \varphi$, and at that stage, we ensured that $G$ would
accept $\varphi$ with probability between $q_0$ and $q_1$.  Since this
is true for all $q_0 \leq p \leq q_1 \in \sD$, it must follow that $G$
accepts $\varphi$ with probability $p$.

%
%
%

\end{proof}

We can strengthen Theorem \ref{effcomplwk} to produce a continuous
weak structure if we require $T$ to be \emph{complete}.

\begin{dfn} Let $\fM$ be a continuous $\sL$-structure.  We write $Th(\fM)$
  for the set of continuous $\sL$-sentences $\varphi$ such that
  $\fM(\varphi) = 0$.  We say that $T$ is \emph{complete} if $T =
  Th(\fM)$ for some $\fM$.\end{dfn}

\begin{cor}\label{effcomplstr} Let $T$ be a complete decidable continuous
  first-order theory.  Then there is a probabilistically decidable
  weak structure $\fM$ such that $\fM \models T$.\end{cor}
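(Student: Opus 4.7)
The plan is to obtain the desired weak structure as the metric quotient of the pre-structure produced by Theorem \ref{effcomplwk}. Applying that theorem to $T$ yields a probabilistically decidable continuous pre-structure $\fM \models T$ together with a witnessing probabilistic Turing machine $G$; set $\fN := \fM/\!\sim$, where $a \sim b$ iff $d^{\fM}(a,b) = 0$.

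First I would verify that $\fN$ is a continuous weak $\sL$-structure satisfying $T$. The uniform continuity axioms A13--A14 guarantee that every function and relation symbol of $\sL$ descends to well-defined operations on $\sim$-classes, and the induced distance $d^{\fN}$ is a genuine metric. A routine induction on formula complexity then yields $\fN(\varphi) = \fM(\varphi)$ for every sentence $\varphi \in \sL$, so in particular $\fN \models T$.

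Next I would argue that the same machine $G$ witnesses probabilistic decidability of $\fN$, provided we fix an effective naming scheme for the universe: enumerate the elements of $\fN$ by the computable list of closed terms of the Henkin witness set $C$, with $\sim$-equivalent terms naming the same element. The constant $c_{[t]}$ of $\sL(\fN)$ is then identified with $c_t$ of $\sL(\fM)$, which is legitimate because
\[
c_{[t]}^{\fN}([x]) = d^{\fN}([x],[t]) = d^{\fM}(x,t) = c_t^{\fM}(x).
\]
Under this identification an $\sL(\fN)$-sentence $\varphi$ is literally an $\sL(\fM)$-sentence, with $\fN(\varphi) = \fM(\varphi)$, so feeding $\varphi$ to $G$ produces acceptance with precisely the probability demanded by $D^*(\fN)$.

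The main subtlety I expect, and the only point not immediate from Theorem \ref{effcomplwk}, is the multi-naming issue: if $s \sim t$ are distinct closed terms in $C$, the machine $G$ must return identical acceptance probabilities on $\varphi[s/x]$ and $\varphi[t/x]$. This is forced by the construction of $G$, whose behavior is governed by provability of bounds $\psi \dotminus k/2^n$ in the maximal consistent Henkin extension $\Delta$ of $T$: since $\Delta \vdash d(s,t) \dotminus 2^{-n}$ for every $n$, the uniform continuity axioms A13--A14 force $\Delta$ to prove the same bounds on $\varphi[s/x]$ as on $\varphi[t/x]$. Hence acceptance probabilities agree on $\sim$-equivalent names, and $\fN$ is probabilistically decidable as required.
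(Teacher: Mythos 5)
Your proof is correct, but it takes a genuinely different route from the paper's. The paper does not quotient at all: its two-line argument is that, since $T$ is complete (hence the theory of an honest weak structure), $T$ already contains a sentence of the form $\sup_{x,y}\left((x=y)\dotminus d(x,y)\right)$ forcing $d$ to separate points, so the pre-structure delivered by Theorem \ref{effcomplwk} carries a genuine metric as it stands. You instead pass to the metric identification $\fM/\!\sim$ and then check (i) that the quotient is a weak structure with the same sentence values, and (ii) that the original machine $G$ still works once elements are allowed to carry several names drawn from the closed terms of $C$. Both steps are sound: the moduli $\delta_{s,i}$ take values in $(0,1]$, so $d^{\fM}(c,e)=0$ forces exact agreement of atomic values via clauses (4)--(5) of the definition of a pre-structure, an induction propagates this to all formulas, and the same observation shows $G$ assigns equal acceptance probabilities to $\varphi[s/x]$ and $\varphi[t/x]$ whenever $s\sim t$, because $G$ accepts each $\sL^{*}$-sentence $\psi$ with probability exactly $\fM(\psi)$. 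What your route buys: it nowhere uses completeness of $T$, so it in fact proves the stronger statement that every decidable theory has a probabilistically decidable weak model, and it avoids the paper's appeal to an equality symbol that the definition of a continuous signature does not actually provide. What the paper's route buys, when its premise is granted, is brevity and a one-name-per-element term model, sidestepping the multi-naming bookkeeping that your argument (correctly) has to address.
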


\begin{proof} If the signature has no metric, then Theorem
  \ref{effcomplwk} suffices.  Otherwise, we note that $T$ must contain
  the sentence $\sup\limits_{x,y}{\left((x=y) \dotminus
  d(x,y)\right)}$, so that when we apply Theorem \ref{effcomplwk}, the
  function $d^\fM$ is a metric on $\fM$.\end{proof}

\section{Examples}\label{examples}

A full treatment of each of the following classes of examples suggests
a paper --- or many papers --- of its own.  However, in each case some
suggestion is given of the kind of data given by the assumption that
an element of the class is probabilistically computable.

\subsection{Hilbert Spaces}\label{hilbertspaces}

A pre-Hilbert space over a topological field $F$ is a vector space
with an inner product meeting all requirements of being a Hilbert
space except perhaps that it may not be complete with respect to the
norm.  The authors of \cite{mtmetric} identify a pre-Hilbert space $H$
with the many-sorted weak structure \[\sM(H) =
\left(\left(B_n(H) : n \geq 1 \right), 0, \{I_{mn}\}_{m < n},
  \{\lambda_r\}_{r \in F}, +, -, \langle \cdot, \cdot \rangle \right)\]
  where $B_n(H)$ is the closed ball of radius $n$, the map $I_{mn}$ is
  inclusion of $B_m$ in $B_n$, each $\lambda_r$ is a scaling function,
  $+$ and $-$ are the standard vector operations, and $\langle \cdot ,
  \cdot \rangle$ is the inner product.  We also write $||x||$ for
  $\sqrt{\langle x,x \rangle}$, and the structure has a metric given by
  $d(x,y) = ||x-y||$.  Of course, the normal case is to let $F =
  \real$, but this is not necessary.  Now the pre-Hilbert space $H$ is
  clearly a continuous weak structure in the obvious signature.  A
  true Hilbert space is a continuous structure.

\begin{thm}[\cite{mtmetric}] There is a continuous first order theory,
  IHS, such that the following hold:
\begin{enumerate}
\item IHS is (classically) computably axiomatized
\item IHS is complete
\item Any two models of IHS with the same infinite cardinality are
  isomorphic.
\item The continuous first-order structures $\sM$ that satisfy IHS are
  exactly the infinite-dimensional Hilbert spaces.
\item IHS admits quantifier elimination.
\item IHS is $\omega$-stable.
\end{enumerate}
\end{thm}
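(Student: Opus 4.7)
The plan is to give explicit axioms for IHS and then verify the six clauses in turn. I would axiomatize the real vector space structure with inner product and norm on each sort $B_n$, the compatibility of the inclusions $I_{mn}$ and scalings $\lambda_r$, and the identity $d(x,y) = \|x-y\|$ together with $\|x\|^2 = \langle x,x\rangle$. Uniform continuity of $+$, $-$, $\lambda_r$, and $\langle\cdot,\cdot\rangle$ restricted to the ball $B_n$ is automatic on bounded sets of a normed space and supplies explicit moduli $\delta_{s,i}$ for axioms A13--A14. Infinite-dimensionality is then imposed by the schema requiring, for each $n$ and each $k$, the axiom
\[\Bigl(\inf_{x_1,\dots,x_n \in B_1}\bigl(\max_{i\neq j}|\langle x_i, x_j\rangle| \vee \max_i|\|x_i\|-1|\bigr)\Bigr)\dotminus 2^{-k},\]
which demands approximately $n$-orthonormal systems in $B_1$. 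This presentation is manifestly computable, giving (1), and (4) follows because the metric completion of any pre-structure satisfying these axioms is an infinite-dimensional Hilbert space, while every such Hilbert space satisfies every axiom.

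For (3), I would invoke the classical theorem that any two infinite-dimensional Hilbert spaces of the same density character are linearly isometric, since each admits an orthonormal basis of that cardinality. In the continuous setting, the role of cardinality is played by density character, so two models of the same infinite density character are isomorphic. Completeness (2) then follows from the continuous analogue of the {\L}o\'s--Vaught test: a theory with no finite models that is $\kappa$-categorical in density character for some uncountable $\kappa$ is complete.

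For quantifier elimination (5), I would run a continuous back-and-forth argument. Given $H_1, H_2 \models \mathrm{IHS}$, a linear isometry $f$ between finite-dimensional subspaces, and a new vector $v \in H_1$, decompose $v = u + w$ with $u \in \mathrm{dom}\,f$ and $w \perp \mathrm{dom}\,f$; send $u$ via $f$, then choose any $w' \in H_2$ with $w' \perp \mathrm{ran}\,f$ and $\|w'\| = \|w\|$, which exists because $H_2$ is infinite-dimensional. This shows that the quantifier-free type of a tuple over $\emptyset$ determines its full type, giving (5). Using (5), a $1$-type over a countable $A$ is pinned down by the pair $(P_{\overline{\mathrm{span}}\,A}(x),\|x-P_{\overline{\mathrm{span}}\,A}(x)\|)$; the parameter space is a separable Hilbert space crossed with $[0,\infty)$, which is separable in the metric induced on $S_1(A)$, giving (6). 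The main obstacle will be the metric bookkeeping in (5) and (6): one must verify that the back-and-forth yields approximate elementarity in the sense of continuous logic, and that the $d$-metric on the type space matches the natural Hilbert space metric on the parameter space, so that $\omega$-stability reduces to a density count.
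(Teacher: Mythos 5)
The paper does not actually prove this theorem---it is imported wholesale from \cite{mtmetric}---and your proposal reconstructs essentially the argument given in that reference: the axiomatization of infinite-dimensionality by a schema asserting approximate orthonormal systems of every finite size, categoricity in each infinite density character via orthonormal bases, completeness by the continuous {\L}o\'s--Vaught test, quantifier elimination by back-and-forth using the orthogonal decomposition $v = u + w$, and $\omega$-stability by parametrizing $1$-types over $A$ by the projection onto $\overline{\mathrm{span}}\,A$ together with the residual norm. Your outline is correct and matches the standard proof, so the only work remaining is the metric bookkeeping you already flag (rescaling $|\langle x_i,x_j\rangle|$ into $[0,1]$, and checking that the $d$-metric on the type space agrees with the Hilbert-space metric on the parameter space).
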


Since IHS is computably axiomatizable and complete, it must also be
decidable.  Consequently, Theorem \ref{effcomplstr} shows that there
must exist some probabilistically computable weak structure satisfying
IHS.  Clearly one challenge of handling Hilbert spaces from a
computational viewpoint is the essential uncountability of a true
Hilbert space. However, computational scientists are generally
undaunted by this feature, being satisfied with approximations in
place of true limits of Cauchy sequences.  We will adopt a similar
approach.  The following result is well known (see \cite{liebloss} for
a proof).

\begin{lem} For any $p$, the space $L^p(\real)$ is separable (i.e.\
  has a countable dense subset).\end{lem}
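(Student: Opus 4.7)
The plan is to exhibit an explicit countable dense set and verify density by a standard three-step approximation. Since $L^\infty(\mathbb{R})$ is actually not separable, I read the statement as applying to $1 \le p < \infty$ (the range in which the cited source proves the result). The candidate dense set $D$ will consist of all finite $\mathbb{Q}$-linear combinations of indicator functions of intervals with rational endpoints:
\[
D = \left\{ \sum_{k=1}^{n} q_k \, \mathbf{1}_{[a_k, b_k]} : n \in \mathbb{N},\; q_k, a_k, b_k \in \mathbb{Q} \right\}.
\]
This set is manifestly countable, so the whole content is showing density.

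The first step is the usual reduction: simple functions of the form $\sum_{i=1}^{m} c_i \mathbf{1}_{E_i}$ with each $E_i$ measurable of finite measure are dense in $L^p(\mathbb{R})$. For nonnegative $f \in L^p$ this follows from monotone approximation by simple functions together with dominated convergence applied to $|f - f_n|^p$; the general case splits $f$ into positive and negative parts, and real and imaginary parts if needed. So given $f \in L^p$ and $\varepsilon > 0$, I can find such a simple $g$ with $\|f - g\|_p < \varepsilon/3$.

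The second step is to replace each $E_i$ by a set that is a finite union of open intervals. By outer regularity of Lebesgue measure, for each $E_i$ I can choose an open set $U_i \supseteq E_i$ with $\mu(U_i \setminus E_i) < \eta$, where $\eta > 0$ is to be chosen. Every open subset of $\mathbb{R}$ is a countable union of disjoint open intervals, and since $\mu(U_i) < \infty$, a sufficiently long finite subunion $V_i \subseteq U_i$ satisfies $\mu(U_i \setminus V_i) < \eta$, hence $\mu(E_i \triangle V_i) < 2\eta$. Because $\|\mathbf{1}_{E_i} - \mathbf{1}_{V_i}\|_p = \mu(E_i \triangle V_i)^{1/p}$, I can choose $\eta$ small enough (depending on $m$, the $c_i$, and $p$) to force $\|g - h\|_p < \varepsilon/3$ for $h = \sum_i c_i \mathbf{1}_{V_i}$.

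The final step is to rationalize. Each interval in $V_i$ can be replaced by one with rational endpoints at cost arbitrarily small in $L^p$, since shrinking an interval by a set of measure $\delta$ changes $\mathbf{1}_{V_i}$ in $L^p$-norm by at most $\delta^{1/p}$. Likewise the scalars $c_i$ can be approximated by rationals $q_i$, paying only $|c_i - q_i| \cdot \mu(V_i)^{1/p}$ each. Bookkeeping these errors and making each one $< \varepsilon/(3m)$ yields an element $h' \in D$ with $\|h - h'\|_p < \varepsilon/3$, so $\|f - h'\|_p < \varepsilon$ by the triangle inequality. I expect no real obstacle, only the routine choice of $\eta$ and the rational tolerances in terms of $m$, $\max_i |c_i|$, and $p$; the only conceptual point that needs care is invoking outer regularity and the structure of open sets in $\mathbb{R}$ to pass from arbitrary measurable sets to finite unions of intervals.
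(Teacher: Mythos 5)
Your proof is correct and is the standard argument: rational combinations of indicators of rational intervals, with density established via simple functions, outer regularity, and rationalization of endpoints and coefficients. The paper itself offers no proof, deferring entirely to the cited reference (Lieb--Loss), whose argument is essentially the one you give; your observation that the claim must be restricted to $1 \le p < \infty$ (since $L^\infty(\real)$ is not separable) is a correct and worthwhile refinement of the statement as written.
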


Let $F$ be a countable topological field dense in $\real$, let $H$
be a separable Hilbert space over the reals, and let $D$ be a
countable dense subspace of $H$.  We will write $H^F$ for the
restriction of $D$ to the language which includes scalars only from
$F$.  It is well known that $H$ must have a countable orthonormal
basis (see, for instance, \cite{reedsimon1}).

\begin{prop} If $H^F$ is probabilistically computable, then there is a
  probabilistic Turing machine which will, given a probabilistically
  computable countable basis for $H^F$, produce a probabilistically
  computable orthonormal basis for $H^F$.\end{prop}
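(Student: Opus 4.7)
The plan is to implement a probabilistic version of the Gram-Schmidt orthonormalization procedure. Given the probabilistically computable basis $(v_n)_{n \in \omega}$ of $H^F$, I will recursively construct orthonormal vectors by the classical formulas
\[ u_1 = \frac{v_1}{\|v_1\|}, \qquad w_{n+1} = v_{n+1} - \sum_{k=1}^n \langle v_{n+1}, u_k \rangle u_k, \qquad u_{n+1} = \frac{w_{n+1}}{\|w_{n+1}\|}. \]
The operations $+$, $-$, $\lambda_r$, and $\langle \cdot, \cdot \rangle$ are primitive in the signature of $\sM(H)$, so each $u_n$ can be represented as a term built from the $v_k$ together with a scalar in $F$, provided that the normalizing reciprocal $1/\|w_{n+1}\|$ can be approximated by an element of $F$.

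For this I would use the assumption that $H^F$ is probabilistically computable. Proposition \ref{sfunction} supplies classically computable upper and lower approximations to $\langle x, y \rangle$ for any closed terms $x, y$, and hence to $\|w\|^2$ and $\|w\|$. Since $F$ is dense in $\real$, I can then locate $r \in F$ with $|r - 1/\|w\|| < \epsilon$ for any prescribed $\epsilon > 0$, using a search through an effective enumeration of $F$ (the linear independence of the $v_n$ ensures $w_{n+1} \neq 0$, so the division is legitimate). The probabilistic Turing machine proceeds in stages: at stage $s$ it produces names of approximate vectors $u_1^{(s)}, \ldots, u_{n(s)}^{(s)}$ agreeing with the true Gram-Schmidt vectors to within $2^{-s}$, and it extends the list by one new vector. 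Answering any atomic query about the resulting sequence then reduces to running the stages long enough to resolve the required dyadic precision, which is exactly what is needed for probabilistic decidability of the atomic diagram.

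The main obstacle I anticipate is error propagation. Each $u_{n+1}$ is defined in terms of all earlier $u_k$ through inner products, so the precision required at step $n$ grows with $n$, and the normalization step divides by $\|w_{n+1}\|$ which could be small. The remedy is to fix a precision schedule large enough that at stage $s$ every input used in constructing $u_{n+1}^{(s)}$ is known to precision much finer than $2^{-s}$, with the margin depending on $n$ and on uniform lower bounds for the $\|w_k\|$. Such bounds exist because the $v_n$ are linearly independent and the schedule can be computed classically from $n$ and $s$. I expect the verification that classical Gram-Schmidt is sufficiently stable under these controlled perturbations to be tedious but routine, and the output sequence will then serve as a probabilistically computable orthonormal basis for the completion of $H^F$.
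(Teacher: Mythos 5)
Your proposal follows the same route as the paper: both proofs are implementations of the Gram--Schmidt process applied to the given probabilistically computable basis. You supply considerably more detail on the effectiveness issues (approximating $1/\|w_{n+1}\|$ by elements of $F$, and controlling error propagation through a precision schedule) than the paper does, as its proof simply writes down the Gram--Schmidt recursion and asserts the result.
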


\begin{proof} The proof is precisely an implementation of the
  Gram--Schmidt process.  Let $(u_i)_{i \in \omega}$ be a
  probabilistically computable countable basis.  Set $v_1 :=u_1$.  For
  $s \geq 2$, at stage $s$, we set \[v_s := u_s - \sum\limits_{i =
  1}^{s-1} \frac{\langle u_s, v_i\rangle}{||v_i||}.\]  Now $(v_i)_{i
  \in \omega}$ is an orthogonal basis for $H^F$.  We can normalize to
  an orthonormal basis $(\tilde{v}_i)_{i \in \omega}$ by setting
  $\tilde{v}_i : = \frac{v_i}{||v_i||}$. \end{proof}

\subsection{Banach Spaces and Banach Lattices}

Certainly, the framework of Section \ref{hilbertspaces} is sufficient,
without the inner product, to account for any normed vector space as a
continuous pre-structure (and thus any Banach space as a continuous
structure).

One also sometimes sees some Banach spaces with the
additional structure of a lattice included.  In such a structure, $f
\vee g$ is the pointwise minimum of $f$ and $g$, and $f \wedge g$ is
the pointwise maximum.  Such an approach is taken in \cite{schaefer,
  lindenstrauss} and also in \cite{mtmetric}. 

Fixed point theorems for operators on Banach spaces or appropriate
subsets of them are an important technique for many non-linear
differential equations \cite{evanspde, gilbargtrudinger, saatybram,
  reedsimon1}.  It is well-known that the classical Brouwer fixed point
theorem is not effectively true (that is, roughly, if the convex hull
$C$ of a nonempty finite set of points in $\real^n$ and a continuous
function $f: C \to C$ are given in a computable way, there may still
be no algorithm to find a fixed point for $f$; see
\cite{simpsonsosa}).  Since the countable spaces we are working with
are not complete, we cannot hope for an exact version of the Banach
fixed point theorem.  However, the following approximate version does
hold (the classical proof is described in \cite{evanspde}; the proof
below is a slight modification).

\begin{prop}[Effective Banach Fixed Point Theorem] There is an
  effective procedure which will, given $\epsilon > 0$, a
  probabilistically computable normed vector space $X$ and a
  computable nonlinear mapping $A: X \to X$ such that \[d\left(A(u) -
  A(v)\right) \leq \gamma d(u,v)\] for some $\gamma < 1$, produce an
  element $x^*$ such that $d\left(x^*, A(x^*)\right) <
  \epsilon$.\end{prop}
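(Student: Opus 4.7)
The plan is to effectivize the classical Picard iteration $x_{n+1} := A(x_n)$, combining the geometric rate of contraction with the computable upper approximations to distances supplied by Proposition \ref{sfunction} to obtain a deterministic termination bound.

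First I would select any computable element $x_0 \in X$; for a normed vector space the zero vector is a canonical choice. I would compute $x_1 := A(x_0)$ and then invoke Proposition \ref{sfunction} to obtain a classically computable, monotonically decreasing function $g$ whose limit in its second argument is $d(x_0, x_1)$; evaluating $g$ at any stage yields a dyadic upper bound $D \geq d(x_0, x_1)$. Using the contraction constant $\gamma$ (taken as a dyadic rational, which is the standard formulation), I would compute the least $N \in \bN$ for which $\gamma^N D < \epsilon$, and then iterate $N$ times to output $x^* := x_N$.

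For correctness, a routine induction from the contraction hypothesis gives $d(x_n, x_{n+1}) \leq \gamma^n d(x_0, x_1) \leq \gamma^n D$ for all $n$, so that
\[d(x^*, A(x^*)) = d(x_N, x_{N+1}) \leq \gamma^N D < \epsilon,\]
as required.

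The main obstacle is that in a probabilistically computable pre-structure, distances are never available as exact values but only as probabilities approached in the limit, so the classical iterative argument cannot literally be executed. The key observation is that the termination argument needs only an \emph{upper} bound on $d(x_0, x_1)$ rather than its exact value, and Proposition \ref{sfunction} supplies computable upper bounds uniformly. A secondary subtlety is the interpretation of ``computable nonlinear mapping $A : X \to X$'': I would read this as the existence of a classical Turing machine that, given an index for a closed term representing an element of $X$, returns an index for a closed term representing its image under $A$, so that the finite orbit $x_0, x_1, \ldots, x_N$ remains inside the computable representation of $X$ throughout the iteration.
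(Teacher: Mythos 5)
Your proof is correct and follows essentially the same route as the paper's: Picard iteration $x_{n+1} = A(x_n)$ from a fixed computable starting point, with the number of iterations determined effectively from $\gamma$ and an approximation to the initial displacement $d(x_0, A(x_0))$. Your explicit appeal to Proposition \ref{sfunction} for the upper bound $D$ just spells out what the paper compresses into ``such an $i$ can be found effectively by standard approximations.''
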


\begin{proof} Fix some $u_0 \in X$, and let $i >
  \log_\gamma{\frac{\epsilon}{d\left(A(u_0), u_0\right)}}$.  Such an
  $i$ can be found effectively by standard approximations.  Write
  $u_k$ for $A^k(u_0)$.  Then \[d\left(A(u_{i+1}), u_{i+1}\right) =
  d\left(A(u_{i+1}), A(u_i)\right) \leq \gamma d\left(u_{i+1},
  u_i\right) \leq \cdots \leq \gamma^i d\left(A(u_0), u_0\right) <
  \epsilon.\]  Let $u^* = u_{i+1}$, and the result holds.\end{proof}

This shows that approximate weak solutions to certain differential
equations (for instance, some of reaction-diffusion type, see
\cite{evanspde}) can be found effectively in probabilistically
computable structures.  The key insight of this fixed point result is
this.  The traditional view of effective model theory has asked
whether a particular theorem is ``effectively true.''  By contrast,
the important question for applications is more typically whether the
theorem is ``effectively \emph{nearly} true.''  That is,
approximations are good enough, and often all that is necessary.  For
solutions of differential equations modeling applications, for
instance, practitioners often ``would rather have an accurate
numerical solution of the correct model than an explicit solution of
the wrong model.  Explicit solutions are so rare that fast accurate
numerical analysis is essential'' \cite{optionpricing}.  While it is
not new to observe that classically unsolvable problems can be
effectively approximated, the framework of probabilistically
computable structures in continuous first-order logic is one that
calls attention to the possibility of approximate solutions, rather
than to the impossibility of exact ones.

\subsection{Probability Spaces}

Let $\sX = (X, \sB, \mu)$ be a probability space.  We say that $B \in \sB$
is an \emph{atom} if $\mu(B) > 0$ and there is no $B' \in \sB$ with
$B' \subseteq B$ and $0 < \mu(B') < \mu(B)$.  We say that $\sX$ is
atomless if and only if $\sB$ contains no atoms.  Let $\hat{\sB}$ be
the quotient of $\sB$ by the relation $B_1 \sim B_2$ if and only if
$\mu (B_1 \triangle B_2) = 0$.  The authors of \cite{mtmetric} identified
$\sX$ with the structure \[\left(\hat{\sB}, 0, 1, \cdot^c, \cap, \cup,
\mu\right)\] with the metric $d(A, B) = \mu (A \triangle B)$.

\begin{thm}[\cite{mtmetric}] There is a continuous first-order theory APA
  such that
\begin{enumerate}
\item APA is finitely axiomatizable.
\item APA is complete.
\item APA admits quantifier elimination.
\item The continuous pre-structures satisfying APA are exactly the
  atomless probability spaces, represented as above.
\end{enumerate}
\end{thm}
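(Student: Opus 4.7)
The plan is to exhibit a finite list of continuous axioms and then verify the four clauses in order, deferring completeness until after quantifier elimination. For clause (1), I would list: the equational Boolean-algebra axioms on $(0,1,\cdot^c,\cap,\cup)$, converted into continuous sentences of the form $\sup_{\bar x}|t_1(\bar x)-t_2(\bar x)|$; a tying axiom $\sup_{x,y}|d(x,y)-\mu(x\triangle y)|$, with $x\triangle y$ expressed as a Boolean term; the measure axioms $\mu(0)$ and $\lnot\mu(1)$, encoding $\mu(\emptyset)=0$ and $\mu(\top)=1$, together with finite additivity $\sup_{x,y}|\mu(x\cup y)+\mu(x\cap y)-\mu(x)-\mu(y)|$; and the atomlessness axiom $\sup_x\inf_y\max\{d(y,x\cap y),\,|\mu(y)-\tfrac12\mu(x)|\}$, asserting that each element has a subelement of half its measure.

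For clause (4), one direction is immediate: every atomless probability space in the given presentation satisfies these axioms. For the converse, the Boolean axioms force $\hat{\sB}$ to be a Boolean algebra; the measure axioms force $\mu$ to be a finitely additive probability measure of total mass one; the tying axiom forces $d$ to be the symmetric-difference pseudometric; the uniform continuity of $\mu$ built into its being a relation symbol, together with metric completeness in the full-structure case, extends $\mu$ to a countably additive measure; and the atomlessness axiom directly excludes atoms.

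For clause (3), I would prove quantifier elimination by a back-and-forth between two $\aleph_1$-saturated models $\fM_1,\fM_2$ of APA. Given a finite partial isomorphism between the generated subalgebras (matching measures on all their atoms), and a new element $b\in\fM_1$, $b$ splits each atom of the current finite subalgebra of $\fM_1$ into two measurable pieces of prescribed measures. Iterated application of atomlessness in $\fM_2$ produces pieces of approximately those measures inside the corresponding atoms, and $\aleph_1$-saturation upgrades countably many approximate equalities to an exact realization, yielding a matching $b'\in\fM_2$. Hence any two tuples with the same quantifier-free type realize the same full type, which is the continuous-logic criterion for QE. Clause (2), completeness, is then immediate: the type of the empty tuple is uniquely determined by the axioms, so the value of every sentence is fixed by APA alone.

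The main obstacle I expect is packaging atomlessness as a single continuous sentence whose zero set is exactly the atomless algebras, and then verifying that near-bisectors furnished by the continuous axiom assemble, via a Cauchy sequence in the complete metric, into an exact bisector in the structure case; the rest is standard bookkeeping and a routine back-and-forth.
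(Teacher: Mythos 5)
The paper does not actually prove this theorem: it is imported verbatim from \cite{mtmetric} with a citation and no argument, so there is no internal proof to compare against. Your reconstruction follows the standard route of that source --- a finite axiom list (Boolean-algebra identities, a finitely additive probability measure, the tie $d(x,y)=\mu(x\triangle y)$, and an approximate-bisection sentence for atomlessness), quantifier elimination by back-and-forth in $\aleph_1$-saturated models using the fact that a quantifier-free type is determined by the measures of the atoms of the finitely generated subalgebra, and completeness read off from QE together with the determined values of the closed terms. That architecture is correct, and your observation that completeness should be deferred until after QE is the right order of operations.

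Two points need more than the ``routine bookkeeping'' you allow yourself. First, in clause (4) your converse direction only establishes that a model of the axioms is an \emph{abstract} atomless probability algebra (a Boolean algebra carrying a strictly positive, in the complete case countably additive, probability measure with the symmetric-difference metric). The theorem asserts the models are exactly the structures $\hat{\sB}$ arising from actual probability spaces $(X,\sB,\mu)$, and passing from the abstract algebra to such a representation requires a classical representation theorem (Loomis--Sikorski, or a Baire measure on the Stone space); this is a genuine additional step, not packaging. Second, the obstacle you flag at the end --- that the atomlessness sentence only furnishes near-bisectors in a pre-structure --- dissolves without any appeal to completeness: if $B$ were an atom of measure $m>0$, applying the axiom to $B$ with tolerance $\epsilon<m/4$ yields $y$ with $\mu(y\setminus B)<\epsilon$ and $|\mu(y)-m/2|<\epsilon$, whence $0<\mu(y\cap B)<m$, contradicting atomicity. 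Exact bisection is needed only for the easy direction (that genuine atomless spaces satisfy the axiom), where it holds by Sierpi\'nski's theorem on the range of an atomless measure. With the representation step supplied, your proof goes through.
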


Since APA is finitely axiomatizable and complete, it is also
decidable.  Thus, Theorem \ref{effcomplstr} gives us a
probabilistically decidable model.  Of course, any separable
probability structure can be approximated by a countable dense set.

A standard issue in effective model theory is whether two isomorphic
structures must be isomorphic via a computable function.  The
following result shows that the answer for probability structures is
affirmative.

\begin{prop} Let $\fB$ and $\fC$ be isomorphic, probabilistically
  computable atomless probability structures with universes $\hat{\sB}$ and
  $\hat{\sC}$, respectively.  Then there is a (classically) computable
  function witnessing the isomorphism.\end{prop}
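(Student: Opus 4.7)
The plan is an effective back-and-forth construction of $\phi\colon \hat{\sB} \to \hat{\sC}$, with the probabilistic oracles first removed by means of Proposition~\ref{sfunction}.

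Apply Proposition~\ref{sfunction} (and its Corollary) to $\fB$ and $\fC$ separately, obtaining classically computable monotone sequences that bracket, from above and below, the value $\mu(\tau)$ for every quantifier-free sentence $\tau$ in either continuous atomic diagram. From this point onward the construction is an ordinary classical computation. Fix enumerations $\hat{\sB} = (b_n)_n$ and $\hat{\sC} = (c_n)_n$ and build finite partial measure-algebra isomorphisms $\phi_0 \subseteq \phi_1 \subseteq \dots$, alternating directions so that $b_s \in \mathrm{dom}(\phi_{2s})$ and $c_s \in \mathrm{ran}(\phi_{2s+1})$. Extending $\phi_s$ by a pair $(b,c)$ requires that, for every Boolean atom $\tau_\varepsilon$ of the subalgebra generated by $b$ together with the present domain, $\mu_\fC(\tau_\varepsilon(c, \phi_s(\vec b))) = \mu_\fB(\tau_\varepsilon(b, \vec b))$; a suitable $c \in \hat{\sC}$ exists by the hypothesis $\fB \cong \fC$ together with the atomlessness and quantifier elimination of APA.

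The main obstacle is that exact equality of real measures cannot be decided from one-sided approximations, so no candidate $c$ can ever be positively certified as correct. I would resolve this with a lexicographic moving-finger search: at each slot of the back-and-forth, commit tentatively to the least-indexed candidate in $\hat{\sC}$ whose atomic measures have not yet been distinguished from the targets by the current bracketing approximations, and if some later stage ever exposes a gap exceeding $2^{-s}$ for one of those atoms, retract that commitment together with every commitment downstream of it and advance the finger past the failed candidate. Every wrong candidate is refuted in finitely many stages, while every exact witness is never refuted, so an induction on slot index shows each finger stabilises: once fingers $0,\dots,n-1$ are frozen, the partial map they define is an \emph{exact} partial isomorphism, by atomlessness at least one exact witness for slot $n$ exists in $\hat{\sC}$, and the finger halts at its least index. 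The limit $\phi = \bigcup_s \phi_s$ is then a total, classically computable bijection preserving every quantifier-free formula, hence by quantifier elimination for APA is an isomorphism of the continuous pre-structures $\fB$ and $\fC$.
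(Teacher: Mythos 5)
Your skeleton is the same as the paper's: a back-and-forth on finite partial measure-algebra isomorphisms, extending at each step by a $y$ whose intersections with the atoms of the current finite subalgebra have prescribed measures. You have also correctly isolated the step the paper passes over in a single clause (``we can effectively find this $y$''): equality of two reals known only through monotone approximations from above and below is a $\Pi^0_1$ condition, so no candidate can ever be positively certified. That diagnosis is the most valuable part of your write-up.

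The proposed repair does not close the gap, however. A moving-finger construction with retractions yields a map $\phi(b_n)=\lim_s \phi_s(b_n)$ that is only limit computable ($\Delta^0_2$), not computable. To output $\phi(b_n)$ you must recognize at some finite stage that the current candidate will never be retracted; but by your own analysis a correct candidate is precisely one that is \emph{never} refuted, and there is no computable bound on how long a wrong candidate survives before the bracketing approximations separate its atomic measures from the targets. So the construction proves that the isomorphism can be taken to be $\Delta^0_2$, which is strictly weaker than the stated proposition. (A notational symptom of the same issue: with retractions the stages are not nested, so $\bigcup_s \phi_s$ is not literally meaningful; you mean a pointwise limit, and that is exactly where computability is lost.) A second, smaller point, which your proof shares with the paper's: the existence of an \emph{exact} witness $y \in \hat{\sC}$ with $\mu^\fC(y \cap f(a_i)) = \mu^\fB(x \cap a_i)$ does not follow from atomlessness alone in a countable, hence incomplete, algebra --- the measures realized below a given element form only a dense subset of the relevant interval --- so the isomorphism hypothesis must be invoked more carefully than either you or the paper does. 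As it stands your argument establishes a $\Delta^0_2$ isomorphism; upgrading it to a genuinely computable one needs an idea not present in the proposal (nor supplied by the paper's own one-line appeal to effectiveness).
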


\begin{proof} The isomorphism is constructed by a standard
  back-and-forth argument.  Suppose that $f: \fB \to \fC$ is a finite
  partial isomorphism, and that $x \in \sB - dom(f)$.  We wish to find
  some $y \in \sC$ such that $f \cup \{(x, y)\}$ is still a partial
  isomorphism.  Without loss of generality, we may assume that $x$ is
  not in the substructure of $\fB$ generated by $dom(f)$.  Let $a_0,
  \dots, a_n$ be the atoms of the substructure of $\fB$ generated by
  $dom(f)$.  We may assume, without loss of generality, that each
  $a_i$ is in $dom(f)$.  Now the isomorphism type of $x$ is determined
  by the values $\mu(x \cap a_i)$.  Since $\sB$ is atomless, there is
  an element $y \in \hat{\sC}$ such that for each $i$ we have
  $\mu^\fB(x \cap a_i) = \mu^\fC(y \cap f(a_i))$, and since $\fB$ and
  $\fC$ are probabilistically computable, we can effectively find this
  $y$.  The extension to a new element of $\fC$ is entirely symmetric.
  The union of the partial isomorphisms constructed in this way will
  be a computable function, and will be an isomorphism from $\fB$ to
  $\fC$.\end{proof}

\subsection{Probability Spaces with a Distinguished Automorphism}

A standard sort of enrichment in stability theory is to expand a known
structure by adding a new function symbol to define a new function,
and to specify that this function be generic.  Fix an interval $I$
under the Lebesgue measure $\lambda$, and let $\fL$ be the algebra of
measurable sets.  Let $G$ denote the group of measure preserving
automorphisms of $(I, \fL, \lambda)$, modulo the relation of
almost everywhere agreement.  Let $\tau \in G$.  Now $\tau$ induces an
automorphism on $(\hat{L}, 0, 1, \cdot^c, \cap, \cup, \lambda)$ in a
straightforward way (see \cite{probwaut, mtmetric}).  We say that
$\tau \in G$ is \emph{aperiodic} if for every positive integer $n$ we
have \[\lambda \{x \in I : \tau^n(x) = x\} = 0.\]  To have a countable
structure of this type, we could take a countable dense subset $I'
\subseteq I$, and for $X \subseteq I'$, set $\lambda(X) = \lambda(cl(X))$.

In \cite{probwaut} and \cite{mtmetric}, an axiomatization is given for
the theory of atomless probability spaces with a distinguished
aperiodic automorphism.  This theory is complete and admits
elimination of quantifiers.  The authors of \cite{probwaut} show that
entropy arises as a model-theoretic rank.

The result below partially describes the degree of algorithmic control
we can expect on iterations of such an automorphism.  Before stating
the result, though, a probabilistic analogue to computable
enumerability should be given:

\begin{dfn} We say that a set is probabilistically computably
  enumerable if and only if there is some probabilistic Turing machine
  $M$ such that
\begin{itemize}
\item If $x \in S$, then for any $q<1$ the machine $M$ accepts
  $x$ with probability at least $q$, and
\item If $x \notin S$, then there is some $q<1$ such that $M$
  accepts $x$ with probability at most $q$.
\end{itemize}
\end{dfn}

In particular (and especially in light of the time complexity
considerations in Section \ref{timecomp}), if we specify an error
tolerance $q$, there is some $s$ such that $M(s,x)$ is below $q$
whenever $x \in S$, and (assuming the tolerance is sufficiently small)
no such $s$ otherwise.

\begin{thm} Let $\sI = (\hat{L}, 0, 1, \cdot^c, \cap, \cup, \lambda, \tau)$
  be a probabilistically computable probability structure based on a
  dense subset of the unit interval, with a measure-preserving
  transformation $\tau$.  Let $A \subseteq I$ be a set of
  positive measure, defined without quantifiers in continuous
  first-order logic.  Write $\fA$ for the set $\bigcup\limits_{n \in
  \omega} \tau^n(A)$.  Then for any isomorphism $f: \sI \to \sJ$ to a
  probabilistically computable structure $\sJ$, the set $f(\fA)$ is
  probabilistically computably enumerable.
\end{thm}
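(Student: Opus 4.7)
\emph{Proof proposal.} The plan is to reduce enumerating $f(\fA)$ to producing the iterates $\tau_\sJ^n(f(A))$ inside $\sJ$, and then to express membership in $f(\fA)$ as an effective limit of measure-algebra approximations. First, since $A$ is quantifier-free definable in $\sI$ from a finite parameter tuple $\bar{a}$ and $f$ is a structure isomorphism in the signature containing $\tau$, we have $f(\tau^n(A)) = \tau_\sJ^n(f(A))$ for every $n$, so
\[ f(\fA) \;=\; \bigcup_{n \in \omega} \tau_\sJ^n(f(A)) \]
in the measure algebra of $\sJ$. Quantifier elimination for the theory of atomless probability spaces with an aperiodic automorphism (established in \cite{probwaut, mtmetric}) further implies that $f(A)$ is the unique element of $\sJ$ realizing the corresponding quantifier-free type over $f(\bar{a})$, so once $f(\bar{a})$ is located (probabilistically) in $\sJ$, $f(A)$ is recovered by evaluating the defining term and each $\tau_\sJ^n(f(A))$ by iterating $\tau_\sJ$.

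Next, I would obtain $f(\bar{a})$ by adapting the back-and-forth argument of the preceding proposition to the expanded signature. At each stage the construction searches $\sJ$ for an element whose quantifier-free type over the finite configuration chosen so far matches that of the next component of $\bar{a}$; quantifier elimination guarantees such an element exists, aperiodicity of $\tau_\sJ$ together with atomlessness of $\sJ$ provides enough freedom to realize it, and probabilistic computability of $\sJ$ lets us verify the finitely many constraints involving measures and values of $\tau_\sJ^k$. The union of these finite partial isomorphisms gives probabilistic access to each $\tau_\sJ^n(f(A))$ as an element of $\sJ$.

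Finally, the witnessing machine $M$ is defined as follows. On input (a code for) $y$ in the universe of $\sJ$, at stage $s$ the machine produces a probabilistic approximation of
\[ r_s \;:=\; \lambda^\sJ\!\left(y \setminus \bigcup_{k=0}^{s} \tau_\sJ^k(f(A))\right), \]
amplified by $O(s)$ independent repetitions to force the additive error below $2^{-s-1}$, and accepts $y$ if the estimate falls below $2^{-s}$. When $y \in f(\fA)$, in the sense that $\lambda^\sJ(y \setminus f(\fA)) = 0$, we have $r_s \downarrow 0$, so the cumulative acceptance probability is driven to $1$; when $y \notin f(\fA)$, $r_s$ is bounded below by the positive constant $\lambda^\sJ(y \setminus f(\fA))$, pinning the acceptance probability strictly below $1$. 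The main obstacle is the second step: faithfully upgrading the atomless-probability-space back-and-forth to handle $\tau$ while preserving probabilistic (rather than merely classical) computability. Quantifier elimination for the expanded theory is the decisive input, reducing each back-and-forth step to a finite probabilistically decidable check that the probabilistic computability of $\sJ$ can perform.
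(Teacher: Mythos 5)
There are two genuine problems with your proposal, and both can be avoided by taking the much more direct route the paper takes. The paper simply observes that $\fA$ is defined by the infinitary disjunction $\varphi(x)=\bigvee_{n\in\omega}\bigl(\tau^{-n}(x)\in\sA\bigr)$ of quantifier-free formulas, that the isomorphism $f$ preserves satisfaction of $\varphi$, and that the witnessing machine may therefore just evaluate, at stage $s$, the finite sub-disjunction $\bigwedge_{n\le s}\bigl(\tau^{-n}(x)\in\sA\bigr)$ directly in $\sJ$ using $\sJ$'s probabilistic computability; these values decrease monotonically to $0$ exactly when $x\in f(\fA)$, which is precisely the acceptance behaviour that ``probabilistically c.e.''\ demands. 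In particular there is no need to reconstruct $f$ or to locate $f(\bar a)$ by a back-and-forth: the isomorphism is fixed in the hypothesis, so the finitely many parameters $f(\bar a)$ can simply be hard-coded into the machine. Your second step --- adapting the atomless-probability-space back-and-forth to the signature with $\tau$ --- is therefore unnecessary, and since you yourself flag it as ``the main obstacle'' without carrying it out, as written it is a gap rather than a proof.

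The more serious defect is in your final machine. You estimate $r_s=\lambda^{\sJ}\bigl(y\setminus\bigcup_{k\le s}\tau_{\sJ}^k(f(A))\bigr)$ to within $2^{-s-1}$ and accept when the estimate falls below $2^{-s}$. But $r_s$ decreases to its limit at no controlled rate: if $y$ is in your version of $f(\fA)$ (limit $0$) but $r_s$ decays like $1/s$, then for all large $s$ the estimate exceeds $2^{-s}$ and the machine rejects at every late stage, so the acceptance probability is \emph{not} driven to $1$ and the positive half of the definition of probabilistically c.e.\ fails. Comparing a quantity tending to $0$ against a threshold that also tends to $0$ is exactly the wrong test here; you need either a fixed threshold per tolerance level or, as in the paper, to let the (monotone) values of the finite approximations themselves serve as the acceptance probabilities. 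Separately, note that your membership criterion $\lambda^{\sJ}(y\setminus f(\fA))=0$ defines a different set from the one the paper intends (satisfaction of the infinitary disjunction, i.e., some $\tau^{-n}(y)$ satisfies the formula defining $A$); the theorem statement is ambiguous on this point, but your reading admits elements merely covered by the union of iterates, not only the iterates' images themselves.
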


\begin{proof} Toward part 1, note that $\fA$ is defined by the
  infinitary disjunction \[\varphi(x) = \bigvee\limits_{n \in \omega}
  \tau^{-n}(x) \in \sA\] and that the set $\sA$ is defined by a
  quantifier-free continuous first-order formula.  The isomorphism $f$
  must preserve satisfaction of $\varphi$ --- that is, $\sI \models
  \varphi(x)$ if and only if $\sJ \models \varphi(f(x))$.
  Now let $M$ be a probabilistic Turing machine such that $M(x,s)$ is
  the minimum value of $\bigwedge\limits_{n \leq k}\tau^{-n}(x) \in
  \sA$, where $k$ ranges over all numbers less than or equal to $s$.
  Then $M$ witnesses that $f(\fA)$ is probabilistically computably
  enumerable.
\end{proof}

\section{Time Complexity of Structures}\label{timecomp}

One of the most important applications of probabilistic Turing
machines is their role in computational complexity theory (see
\cite{kozenbk, papadimitriou, impwig}).  Let $P$ be some decision problem.
We say that $Q$ is of class $\mathsf{RP}$ if and only if there is a
probabilistic Turing machine $M_Q$, halting in time polynomial in the
length of the input, such that if $x \in Q$, then $M_Q$ accepts $x$ with
probability at least $\frac{3}{4}$, and if $x \notin Q$, then $M_Q$
rejects $x$ with probability $1$.  This class has the property that
$\mathsf{P} \subseteq \mathsf{RP} \subseteq \mathsf{NP}$. \footnote{In
  both this and the succeeding paragraph, the particular fraction
  $\frac{3}{4}$ is not critical.  Using a so-called ``Amplification
  Lemma,'' any fraction above and bounded away from $\frac{1}{2}$ will
  do \cite{sipserbk}.} 

Another complexity class of interest is the class $\mathsf{BPP}$.  We
say that $Q$ is of class $\mathsf{BPP}$ if and only if there is a
probabilistic Turing machine $M_Q$, halting in time polynomial in the
length of the input, such that if $x \in Q$, then $M_Q$ accepts $x$
with probability at least $\frac{3}{4}$, and if $x \notin Q$, then
$M_Q$ rejects $x$ with probability at least $\frac{3}{4}$.  Here we
know that $\mathsf{RP} \subseteq \mathsf{BPP} \subseteq \Sigma_2^p
\cap \Pi_2^p$.

\begin{dfn}[Cenzer--Remmel \cite{cenzerremmelptime}]\label{cptime} Let $\sA$ be a
  computable structure.  We say that $\sA$ is uniformly polynomial
  time if the atomic diagram of $\sA$ is a polynomial time
  set.\end{dfn}

Clearly, $A \in \mathsf{P}$ if and only if the structure $(\omega, A)$ is
polynomial time.  Also, for any polynomial time structure $\fM$ and
any quantifier-free definable $A \subseteq \fM^n$, we have $A \in
\mathsf{P}$.  We can extend Definition \ref{cptime} in a routine way
for probabilistically computable structures.

\begin{dfn} We say that a probabilistically computable structure is
  polynomial time if and only if there is some probabilistic Turing
  machine $T$ such that, for every pair $(\varphi, p) \in D(\fM)$ the
  machine $T$ halts in polynomial time and accepts $\varphi$ with
  probability $p$.\end{dfn}

Now we can characterize the members of $\mathsf{BPP}$ in terms of
continuous weak structures.

\begin{thm} The class $\mathsf{BPP}$ can be identified with the class
  of quantifier-free definable sets in polynomial time
  probabilistically computable structures in the following way:
\begin{enumerate}
\item Let $A \in \mathsf{BPP}$ be a subset of $\omega$.  Then there is
  a polynomial time probabilistically computable weak structure $\fM$ and a polynomial
  time computable function $f: \omega \to \fM$ such that there is a
  quantifier-free formula $\varphi(x)$ such that $\varphi(x) \leq
  \frac{1}{4}$ for $x \in f(A)$ and $\varphi(x) \geq \frac{3}{4}$ for
  $x \in \fM - f(A)$.
\item Let $\fM$ be a polynomial time probabilistically
  computable weak structure, and let $A, B$ be quantifier-free disjoint
  definable subsets of $\fM^n$, where \[\inf{ \{d(x,y) : x \in A, y \in
  B\}} > 0\] and $A \cup B$ is classically computably enumerable.  Then
  $A$ and $B$ are each of class $\mathsf{BPP}$.
\end{enumerate}
\end{thm}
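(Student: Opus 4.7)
For part~(1), the construction is direct. Given a polynomial-time $\mathsf{BPP}$ machine $M_A$, I would build the weak structure $\fM$ on universe $\omega$ with the discrete metric and a single unary predicate $P$ interpreted by $P^\fM(n) := 1 - \Pr[M_A \text{ accepts } n]$. The witnessing probabilistic Turing machine for $\fM$ handles sentences about $d$ deterministically and handles atomic sentences $P(c_n)$ by simulating $M_A$ on $n$ and returning the complementary bit, both in polynomial time. Taking $f(n) := n$ and $\varphi(x) := P(x)$, the $\mathsf{BPP}$ error bounds on $M_A$ translate directly into $\varphi(x) \leq \tfrac{1}{4}$ on $f(A)$ and $\varphi(x) \geq \tfrac{3}{4}$ on $\fM \setminus f(A)$; uniform continuity of $P$ is vacuous in the discrete metric.

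For part~(2), let $\varphi_A$ and $\varphi_B$ be the quantifier-free formulas defining $A$ and $B$, and let $T$ be the polynomial-time probabilistic Turing machine witnessing that $\fM$ is polynomial-time probabilistically computable. The plan is to syntactically amplify $\varphi_A$ to a new quantifier-free formula $\psi$ with $\fM(\psi(x)) = 0$ on $A$ and $\fM(\psi(x)) \geq \tfrac{3}{4}$ on $B$, then simulate $T$ on $\psi(c_x)$ and return the complementary bit. The key tool is the quantifier-free identity
\[
\lnot(\lnot \varphi \dotminus \varphi) \;=\; \min(2\varphi,1),
\]
which doubles values up to the ceiling $1$; iterating it $O(\log(1/\eta))$ times, for a uniform lower bound $\eta > 0$ on $\fM(\varphi_A(y))$ for $y \in B$, yields the desired $\psi$ of constant size. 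Since $T$ runs in polynomial time on input $\psi(c_x)$, so does the resulting $\mathsf{BPP}$ algorithm; the Amplification Lemma cited in this section polishes any residual constant-size gap, and a symmetric argument using $\varphi_B$ handles $B$. Inputs $x \notin A \cup B$ are handled via the classical computable enumerability of $A \cup B$, either by dovetailing enumeration with sampling, or by interpreting the claim as the natural promise-$\mathsf{BPP}$ statement under the promise $x \in A \cup B$.

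The main obstacle is the extraction of the uniform lower bound $\eta := \inf_{y \in B}\fM(\varphi_A(y)) > 0$ from the separation hypothesis $\inf\{d(x,y) : x \in A, y \in B\} > 0$. The uniform continuity modulus $\delta_{\varphi_A}$ guaranteed by axiom~(A14) bounds $|\fM(\varphi_A(x)) - \fM(\varphi_A(y))|$ from above in terms of $d(x,y)$, which controls $\varphi_A$ near $A$ from above but does not a priori bound $\varphi_A$ from below on $B$. My plan would be to replace $\varphi_A$ by a quantifier-free approximation of the distance predicate $d(\cdot, A)$ built from a finite $r/2$-net for $B$ enumerated from the c.e. hypothesis on $A \cup B$, thereby manufacturing the required gap directly from the separation. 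Verifying this reduction --- and checking that all of the composite formulas used in the amplification remain quantifier-free --- is where the substantive work lies.
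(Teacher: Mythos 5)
Your part (1) is correct and is essentially the paper's own construction: take the discrete structure $(\omega,\sA)$ with a unary predicate recording the acceptance probability of the $\mathsf{BPP}$ machine (your explicit complementation $1-\Pr[M_A\text{ accepts }n]$ is in fact needed to match the stated inequalities, and the paper glosses over it).

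In part (2) you have correctly isolated the crux, but you have not closed it, and the repair you sketch would not work. The gap is exactly the one you name: if ``quantifier-free definable'' merely means that $A$ is the zero set of a quantifier-free $\varphi_A$, then the hypothesis $\inf\{d(x,y):x\in A,\,y\in B\}>0$ yields no uniform lower bound $\eta>0$ on $\fM(\varphi_A(y))$ for $y\in B$. Axiom (A14) only bounds $|\varphi_A(x)-\varphi_A(y)|$ from above, so $\varphi_A$ can tend to $0$ along $B$ even while $B$ stays at distance $1$ from $A$ (e.g.\ a discrete universe with $\varphi_A(b_n)=2^{-n}$); then no \emph{fixed} quantifier-free formula --- in particular no fixed number of iterations of your doubling identity $\lnot(\lnot\varphi\dotminus\varphi)=\min(2\varphi,1)$, which is correct as far as it goes --- produces the constant gap that a single polynomial-time machine needs. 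The net-based substitute does not rescue this: $A$ and $B$ need not be totally bounded, so no finite $r/2$-net need exist, and a formula assembled from an enumerated net would grow with the enumeration rather than being one quantifier-free formula evaluated in one call to $T$. What makes the paper's one-line computation of $\fM(\varphi(\bar a)\dotminus\psi(\bar a))$ go through --- it asserts acceptance probabilities $\frac{1}{2}\pm\inf\{d(\bar a,y):y\in B\}$ --- is an implicit stronger reading of ``definable set,'' standard in continuous logic, under which the defining formula computes the distance predicate $d(\cdot,A)$ (resp.\ $d(\cdot,B)$); then $\fM(\varphi_A(y))=d(y,A)\geq r>0$ for every $y\in B$ and either your amplification or the paper's subtraction finishes the argument. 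So the missing ingredient is definitional rather than something you can manufacture from nets, and as written your proof of part (2) is incomplete.
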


\begin{proof} Toward the first point, let $M$ be a probabilistic
  Turing machine witnessing that $A \in \mathsf{BPP}$.  We let $\fM$
  be the structure $(\omega, \sA)$, where $\sA$ is a unary predicate
  and $\sA(x)$ is the probability that $M$ accepts $x$.  We give $\fM$
  the discrete metric.

For the second point, let $A$ be defined by $\varphi(\bar{x})$, and $B$ by
  $\psi(\bar{x})$.  Now for $\bar{a} \in A \cup B$, to check whether
  $\bar{a} \in A$, we compute $\fM(\varphi(\bar{a}) \dotminus
  \psi(\bar{a}))$.  The computation runs in polynomial time, and
  $\bar{a}$ is accepted with probability at least $\frac{1}{2} + \inf{\{d(\bar{a},
  y) : y \in B\}}$ when $\bar{a} \in A$ and
  with probability at most $\frac{1}{2} - \inf{\{d(\bar{a}, y) : y \in
  B\}}$ when $\bar{a} \in B$.\end{proof}

\bibliographystyle{amsplain}
\bibliography{pnp}

\end{document}